\newcommand{\enma}[1]   {\ensuremath{#1}}
\newcommand{\beq}{\begin{equation}}
\newcommand{\eeq}{\end{equation}}
\newcommand{\bseq}{\begin{subequations}}
\newcommand{\eseq}{\end{subequations}}
\newcommand{\beqn}{\begin{eqnarray}}
\newcommand{\eeqn}{\end{eqnarray}}
\newcommand{\ba}{\begin{array}}
\newcommand{\ea}{\end{array}}
\newcommand{\bct}{\begin{center}}
\newcommand{\ect}{\end{center}}
\newcommand{\btmz}{\begin{itemize}}
\newcommand{\etmz}{\end{itemize}}
\newcommand{\benum}{\begin{enumerate}}
\newcommand{\eenum}{\end{enumerate}}
\newcommand{\norm}[1]{\| #1 \|}                 
\newcommand{\diag}      {\enma{\mathrm{diag}}}
\newcommand{\matbegin}{
        \left[
}
\newcommand{\matend}{
        \right]
}
\newcommand{\tbo}[2]{
  \matbegin \begin{array}{c}
       #1 \\ #2
       \end{array} \matend }
\newcommand{\obt}[2]{
  \matbegin \begin{array}{cc}
       #1 & #2
       \end{array} \matend }
\newcommand{\tbt}[4]{
  \matbegin \begin{array}{cc}
       #1 & #2 \\ #3 & #4
       \end{array} \matend }
\newcommand{\be}{\begin{equation}}
\newcommand{\ee}{\end{equation}}
\newcommand{\cplxs}{ C\kern -.35em \rule{0.03 em}{.7 ex}~   }
\def\complex{\hbox{C\kern -.45em \rule{0.03 em}{1.5 ex}}~}
\newcommand{\bi}{\begin{itemize}}
\newcommand{\ei}{\end{itemize}}
\newcommand{\R}{\mathbb{R}}
\newcommand{\N}{\mathbb{N}}
\newcommand{\DefinedAs}[0]{\mathrel{\mathop:}=}
\newcommand{\AsDefined}[0]{=\mathrel{\mathop:}}
\DeclareMathOperator*{\minimize}{minimize}
\DeclareMathOperator*{\argmax}{argmax}
\newcommand{\abs}[1]{\lvert #1 \rvert}
\newcommand{\vsp}{\vspace*{0.15cm}}
\newtheorem{mythm}{Theorem}
\newtheorem{myprop}{Proposition}
\newtheorem{mylem}{Lemma}
\newtheorem{myrem}{Remark}
\newcommand{\Lf}{L}
\newcommand{\mf}{m}
\newcommand{\bbR}{\mathbb{R}}
\newcommand{\non}{\nonumber}
\newcommand{\ds}{\displaystyle}
\newcommand{\mrd}{\mathrm{d}}
\newcommand{\mre}{\mathrm{e}}
\newcommand{\na}{\mathrm{na}}
\newcommand{\hb}{\mathrm{hb}}
\newcommand{\bz}{{\bar{z}\,}}
\tikzset{
	block/.style = {draw, rectangle, 
		minimum height=0.5cm, 
		minimum width=1cm},
	input/.style = {coordinate,node distance=2.5cm},
	output/.style = {coordinate,node distance=2.5cm},
	arrow/.style={draw, -latex,node distance=2cm},
	sum/.style = {draw, circle, node distance=1cm}
}
\begin{document}

\title{\huge
Transient growth of  accelerated optimization algorithms}

\author{Hesameddin Mohammadi, Samantha Samuelson, and Mihailo R.\ Jovanovi\'c
\thanks{Financial support from the National Science Foundation under awards ECCS-1708906 and ECCS-1809833 is gratefully acknowledged.}
\thanks{The authors are with the Ming Hsieh Department of Electrical and Computer Engineering, University of Southern California, Los Angeles, CA 90089. E-mails: (\{hesamedm, sasamuel, mihailo\}@usc.edu).}
}

\maketitle
\pagestyle{empty}

	\vspace*{-3ex}
\begin{abstract} 
Optimization algorithms are increasingly being used in applications with limited time budgets. In many real-time and embedded scenarios, only a few iterations can be performed and traditional convergence metrics cannot be used to evaluate performance in these non-asymptotic regimes. In this paper, we examine the transient behavior of accelerated first-order optimization algorithms. For convex quadratic problems, we employ tools from linear systems theory to show that transient growth arises from the presence of non-normal dynamics. We identify the existence of modes that yield an algebraic growth in early iterations and quantify the transient excursion from the optimal solution caused by these modes. For strongly convex smooth optimization problems, we utilize the theory of integral quadratic constraints (IQCs) to establish an upper bound on the magnitude of the transient response of Nesterov's accelerated algorithm. We show that both the Euclidean distance between the optimization variable and the global minimizer and the rise time to the transient peak are proportional to the square root of the condition number of the problem. Finally, for problems with large condition numbers, we demonstrate tightness of the bounds that we derive up to constant factors.
\end{abstract}

	\vspace*{-2ex}
\begin{IEEEkeywords}
	Convex optimization, first-order optimization algorithms, heavy-ball method, integral quadratic constraints, Nesterov's accelerated method, non-asymptotic behavior, non-normal matrices, transient growth.
\end{IEEEkeywords}

	\vspace*{-2ex}
\section{Introduction}
\label{se.intro}

First-order optimization algorithms are widely used in a variety of fields including statistics, signal and image processing, control, and machine learning~\cite{botcun05,becteb09,nes13,honrazluopan15,botcurnoc18,linfarjovTAC13admm,mogjovTCNS18,zarmohdhigeojovTAC20}.  Acceleration is often utilized as a means to achieve a faster rate of convergence relative to gradient descent while maintaining low per-iteration complexity.  There is a vast literature focusing on the convergence properties of accelerated algorithms for different stepsize rules and acceleration parameters, including~\cite{sutmardahhin13,pol64,nes83,nes18book}. There is also a growing body of work which investigates robustness of accelerated algorithms to various types of uncertainty~\cite{devgliNes14,hules17,mohrazjovCDC18,mohrazjovACC19,mohrazjovTAC21,micschebe20,povli21}. These studies demonstrate that acceleration increases sensitivity to \mbox{uncertainty in gradient evaluation.}

In addition to deterioration of robustness in the face of uncertainty, asymptotically stable accelerated algorithms may also exhibit undesirable transient behavior~\cite{doncan15}. This is in contrast to gradient descent which is a contraction for strongly convex problems with suitable stepsize~\cite{ber15book}. In real-time optimization and in applications with limited time budgets, the transient growth can limit the appeal of accelerated methods. In addition, first-order algorithms are often used as a building block in multi-stage optimization including ADMM~\cite{ouychelanpas15} and distributed optimization methods~\cite{shilinyuawuyin14}. In these settings, at each stage we can perform only a few iterations of first-order updates on primal or dual variables and transient growth can have a detrimental impact on the performance of the entire algorithm.  This motivates an in-depth study of the behavior of accelerated first-order methods in non-asymptotic regimes.

It is widely recognized that large transients may arise from the presence of resonant modal interactions and non-normality of linear dynamical generators~\cite{treemb05}. Even in the absence of unstable modes, these can induce large transient responses, significantly amplify exogenous disturbances, and trigger departure from nominal operating conditions. For example, in fluid dynamics, such mechanisms can initiate departure from stable laminar flows and trigger \mbox{transition to turbulence~\cite{jovbamJFM05,jovARFM21}.}

\begin{figure}[t]
	\centering
	\begin{tabular}{@{\hspace{0 cm}}r@{\hspace{-0.3 cm}}c}
		\begin{tabular}{c}
			\rotatebox{90}{$\norm{x^t \, - \, x^\star}_2^2$}
		\end{tabular}
		&
		\begin{tabular}{c}
			\includegraphics[width=.22\textwidth]{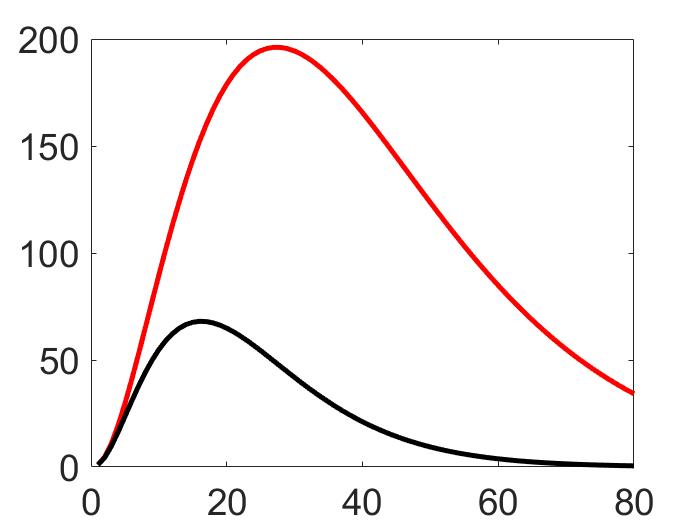}
			\\[-0.0cm]
			{iteration number $t$}
		\end{tabular}
	\end{tabular}
	\caption{Error in the optimization variable for Polyak's heavy-ball (black) and Nesterov's (red) algorithms with the parameters that optimize the convergence rate for a strongly convex quadratic problem with the condition number $10^3$ and a unit norm initial condition with $x^0 \neq x^\star$.}
	\label{fig.motivation}
\end{figure}

In this paper, we consider the  optimization problem
\begin{equation}\label{eq.optProb}
\minimize\limits_x ~f(x)
\end{equation}
where $f$: $\mathbb{R}^n \rightarrow \mathbb{R}$ is a convex and smooth function, and we focus on a class of accelerated first-order algorithms
	\be
	x^{t+2} 
	\, = \,  
	x^{t+1}
	\, + \,  
	\beta
	(
	x^{t+1}
	-  
	x^{t}
	)
	\, - \,  
	\alpha 
	\nabla 
	f
	(
	x^{t+1}
	\, + \,  
	\gamma
	(
	x^{t+1}
	-
	x^{t})
	)	
	\label{eq.1st-order}
	\ee
where $t$ is the iteration index, $\alpha $ is the stepsize, and $\beta$ is the momentum parameter. In particular, we are interested in Nesterov's accelerated and Polyak's heavy-ball methods that correspond to $\gamma=\beta$ and $\gamma=0$, respectively. While these algorithms have faster convergence rates compared to the standard gradient descent ($\gamma=\beta=0$), they may suffer from large transient responses; see Fig.~\ref{fig.motivation} for an illustration. To quantify the transient behavior, we examine the ratio of the largest error in the optimization variable to the initial error. 

For convex quadratic problems,~\eqref{eq.1st-order} can be cast as a linear time-invariant (LTI) system for which modal analysis of the state-transition matrix can be performed. For both accelerated algorithms, we identify non-normal modes that create large transient growth, derive analytical expressions for the state-transition matrices, and establish bounds on the transient response in terms of the convergence rate and the iteration number. We show that both the peak value of the transient response and the rise time to this value increase with the square root of the condition number of the problem. Moreover, for general strongly convex problems, we combine a Lyapunov-based approach with the theory of IQCs to establish an upper bound on the transient response of Nesterov's accelerated algorithm. As for quadratic problems, we demonstrate that this bound scales with the square root of the condition number.
		
This work builds on our recent conference papers~\cite{sammohjovACC20,sammohjovCDC20}. In contrast to these preliminary results, we provide a comprehensive analysis of transient growth of accelerated algorithms for convex quadratic problems and address the important issue of eliminating transient growth of Nesterov's accelerated algorithm with the proper choice of initial conditions.  Adaptive restarting, which was introduced in~\cite{doncan15} to address the oscillatory behavior of Nesterov's accelerated method, provides heuristics for improving transient responses. In~\cite{polsmi19}, the transient growth of second-order systems was studied and a framework for establishing upper bounds was introduced, with a focus on real eigenvalues. The result was applied to the heavy-ball method but was not applicable to quadratic problems in which the dynamical generator may have complex eigenvalues. We account for complex eigenvalues and conduct a thorough analysis for Nesterov's accelerated algorithm as well. Furthermore, for convex quadratic problems, we provide tight upper and lower bounds on transient responses in terms of the condition number and identify the initial condition that induces the largest transient response. Similar results with extensions to the Wasserstein distance have been recently reported in~\cite{cangurzhu19}. Previous work on non-asymptotic bounds for Nesterov's accelerated algorithm includes~\cite{nes04}, where bounds on the objective error in terms of the condition number were provided. However, in contrast to our work, this result introduces a restriction on the initial conditions. Finally, while~\cite{fazribmor18} presents computational bounds we develop analytical bounds on the non-asymptotic value of the estimated optimizer. 

	\vspace*{-1ex}
\section{Convex quadratic problems}
\label{sec.trans}
In this section, we examine transient responses of accelerated algorithms for convex quadratic objective functions,
	\begin{subequations}
	\label{eq.f-Q}
	\beq
	\label{eq.quadObjFunc}
	f(x)  \, = \, \dfrac{1}{2} \, x^T Q \, x
	\eeq
where $Q = Q^T \succeq 0$ is a positive semi-definite matrix. In what follows, we first bring~\eqref{eq.1st-order} into a standard LTI state-space form and then utilize appropriate coordinate transformation to decompose the dynamics into decoupled subsystems. Using this decomposition, we provide analytical expressions for the state-transition matrix and establish sharp bounds on the transient growth and the location of the transient peak for accelerated algorithms. We also examine the influence of initial conditions on transient responses and relegate the proofs to Appendix~\ref{app.lemmaProofs}.

	\vspace*{-1ex}
\subsection{LTI formulation}

The matrix $Q$ admits an eigenvalue decomposition, $Q=V\Lambda V^T$, where $\Lambda$ is the diagonal matrix of eigenvalues with
\beq
	\ba{l}
	L
	\, \DefinedAs \,
	\lambda_1
	\, \ge \, 
	\cdots
	\, \ge \,
	\lambda_r
	\, \AsDefined \,
	m
	\, > \,
	0
	\\[0.1cm]	
	\lambda_{i} 
	\, = \, 
	0~\,\mbox{for}~\, 
	i \, = \, r+1,\ldots,n 
	\ea
	\label{eq.lambdas}
	\eeq
	\end{subequations}
and $V$ is the unitary matrix of the corresponding eigenvectors.  We define the condition number  $\kappa \DefinedAs L/m$  as the ratio of the largest and smallest non-zero eigenvalues of the matrix $Q$. For $f$ in~\eqref{eq.quadObjFunc}, we have  $\nabla f(x) = Q x$, and the change of variables $\hat{x}^t \DefinedAs V^T x^t$ brings dynamics~\eqref{eq.1st-order} to
	\be
	\hat{x}^{t+2} 
	\;=\;
	(I \, - \, \alpha \Lambda ) \, 
	\hat{x}^{t+1}
	\, + \,
	(\beta I \, - \, \gamma \alpha \Lambda ) 
	(
	\hat{x}^{t+1}
	\, - \,  
	\hat{x}^{t}
	).
	\label{eq.NA2}
	\ee
This system can be represented via $n$ decoupled second-order subsystems of the form,
\begin{subequations}
	\label{eq.statespace}
	\begin{align}\label{eq.stateeq}
	\hat{\psi}_i^{t+1} 
	\, = \, 
 	A_i \hat{\psi}_i^t,
	\quad
	\hat{x}_i^{t}
	\, = \,
	C_i \hat{\psi}_i^t
\end{align}
where $\hat{x}_i^{t}$ is the $i$th element of the vector $\hat{x}^t \in \bbR^n$, $\hat{\psi}_i^t \DefinedAs \obt{\hat{x}_i^{t}}{\hat{x}_i^{t+1}}^T$, $C_i \DefinedAs \obt{1}{0}$, and 
\be
	A_i
	\, = \,
	\tbt{0}{1}
	{-(\beta - \gamma \alpha \lambda_i)}
	{1 - \alpha \lambda_i + (\beta- \gamma \alpha \lambda_i)}.
	\label{eq.Ai}
\ee	
\end{subequations}
  
\vspace*{-2ex}
\subsection{Linear convergence of accelerated algorithms}
The minimizers of~\eqref{eq.quadObjFunc} are determined by the null space of the matrix $Q$, $x^\star \in \mathcal{N} (Q)$. The constant parameters $\alpha$ and $\beta$ can be selected to provide stability of subsystems in~\eqref{eq.statespace} for all $\lambda_i \in [m,L]$, and guarantee convergence of $\hat{x}_i^{t}$ to $\hat{x}_i^\star \DefinedAs 0$ with a linear rate determined by the spectral radius $\rho(A_i)<1$. On the other hand, for $i =  r+1, \ldots, n$ the eigenvalues of $A_i$ are $\beta$ and $1$. In this case, the solution to~\eqref{eq.statespace} is given by 
\begin{subequations}
\label{eq.nulDynamics}
\begin{align}\label{eq.nullxhati}
	\hat{x}_i^{t}
	\, = \,
	\dfrac{1 \, - \, \beta^t}{1 \, - \, \beta^{\phantom{t}}} 
	\, 
	(\hat{x}^{1}_i \, - \, \hat{x}^{0}_{0}) 
	\, + \, 
	\hat{x}^{0}_i
\end{align} 
and the steady-state limit of $\hat{x}_i^{t}$,
\begin{align}\label{eq.steadyStatexhatistar}
	\hat{x}_i^\star
	\, \DefinedAs \,
	\dfrac{1}{1 \, - \, \beta}
	\, 
	(\hat{x}^{1}_i \, - \, \hat{x}^{0}_i) \, + \, \hat{x}^{0}_i
\end{align}
\end{subequations}
is achieved with a linear rate $\beta<1$. Thus, the iterates of~\eqref{eq.1st-order} converge to the optimal solution $x^\star = V \hat{x}^\star \in \mathcal{N} (Q)$ with a linear rate $\rho < 1$ and Table~\ref{tab:rates} provides the parameters $\alpha$ and $\beta$ that optimize the convergence rate~\cite[Proposition~1]{lesrecpac16}. 

\begin{table}
	\centering
	\begin{tabular}{|l|ll|l|} 
		\hline 
		& &&
		\\[-.30cm]
		\!\!\! Method  \!\!\! 
		& \!\!\! Optimal parameters \!\!\! & 
		& \!\!\! Linear rate $\rho$ \!\!\!
		\\
		\hline
		\hline
		&&&
		\\[-.25cm] 
		\!\!\! Nesterov \!\!\!
		& 
		\!\!\!
		$ \alpha = \tfrac{4}{3L+m}$
		\!\!\!
		&
		\hspace*{-0.75cm}
		\!\!\!
		$\beta = \tfrac{\sqrt{3\kappa+1}-2}{\sqrt{3\kappa+1}+2}$ 
		\!\!\!
		& 
		\!\!\!
		$1 - \frac{2}{\sqrt{3\kappa+1}}$ 
		\!\!\! 
		\\[0.1cm] 
		\!\!\! Polyak \!\!\!
		& 
		\!\!\!
		$\alpha = \tfrac{4}{(\sqrt{L}+\sqrt{m})^2}$
		\!\!\!
		&
		\hspace*{-0.75cm}
		\!\!\!
		$\beta = \tfrac{( \sqrt{\kappa} - 1 )^2}{ ( \sqrt{\kappa} + 1 )^2} $ 
		\!\!\!
		& 
		\!\!\!
		$1 - \tfrac{2}{\sqrt{\kappa} + 1}$ 
		\!\!\!
		\\[-.35cm]
		& 
		& 
		&
		\\
		\hline
	\end{tabular}
	\caption{Parameters that provide optimal convergence rates for a convex quadratic objective function~\eqref{eq.f-Q} with $\kappa \DefinedAs \Lf / \mf$. 
	}
	\label{tab:rates}
\end{table}
	
	\vspace*{-2ex}
\subsection{Transient growth of accelerated algorithms} 	

In spite of a significant improvement in the rate of convergence, acceleration may deteriorate performance on finite time intervals and lead to large transient responses. This is in contrast to gradient descent which is a contraction~\cite{ber15book}. At any $t$, we are interested in the worst-case ratio of the two norm of the error of the optimization variable $z^t \DefinedAs x^t - x^\star$  to the two norm of the initial condition $\psi^0 - \psi^\star = \obt{ \! (z^0)^T \!\!}{\!\! (z^1)^T \!\!}^T$,
	\beq
	J^2 (t)
	\; \DefinedAs \;
	\sup\limits_{
		\psi^0 \, \neq \, \psi^\star  
	}
	\;
	\dfrac{\norm{x^t \, - \, x^\star}_2^2}{\norm{\psi^0 \, - \, \psi^\star}_2^2}.
	\label{eq.tr}
	\eeq

	\begin{myprop}	
	 For accelerated algorithms applied to convex quadratic problems, $J(t)$ in~\eqref{eq.tr} is determined by
\beq
	J^2 (t)
	\; = \;
	\max
	\left\{
	\displaystyle{\max_{i \, \le \, r}} \; \norm{C_i A_i^t}_2^2, 
	\;
	\beta^{2t}/(1 \, + \, \beta^2)
	\right\}.
	\label{eq.Jquad}
\eeq
	\label{prop.Jquad}
	\end{myprop}
	\vspace*{-2.5ex}
	\begin{IEEEproof}	
Since $V$ is unitary and dynamics~\eqref{eq.statespace} that govern the evolution of each $\hat{x}_i^{t}$ are decoupled, $J (t)$ is determined by
	\beq
	J^2 (t)
	\; = \;
\max\limits_i\sup\limits_{
		\hat{\psi}_i^{0}
		\, \neq \, 
		\hat{\psi}_i^{\star}}
		\;
		\dfrac{(\hat{x}_i^{t} \, - \, \hat{x}_i^{\star})^2}
		{\norm{\hat{\psi}_i^{0} \, - \, \hat{\psi}_i^{\star}}_2^2}
	\label{eq.tr1}
	\eeq
where $\hat{\psi}_i^\star \DefinedAs \obt{\hat{x}_i^\star}{\hat{x}_i^\star}^T$. Furthermore, the mapping from $\hat{\psi}_i^0 - \hat{\psi}_i^\star$ to $\hat{x}_i^{t}-\hat{x}^\star_i$ is given by $\Phi_i (t) \DefinedAs C_i A_i^t$ where the state-transition matrix $A_i^t$ is determined by \mbox{the $t$th power of $A_i$,}
	\beq
	\hat{x}_i^{t} \, - \, \hat{x}_i^{\star}
	\, = \,
	C_i A_i^t (\hat{\psi}_i^0 \, - \, \hat{\psi}_i^\star)
	\, \AsDefined \,
	\Phi_i(t) (\hat{\psi}_i^0 \, - \, \hat{\psi}_i^\star).
	\label{eq.xi-i}
	\eeq 
For $\lambda_i\neq 0$,  $\hat{\psi}_i^{0} - \hat{\psi}_i^{\star}=\hat{\psi}_i^0$ is an arbitrary vector in $\bbR^2$. Thus,
	\beq
	\sup\limits_{
		\hat{\psi}_i^{0}
		\, \neq \, 
		\hat{\psi}_i^{\star} 
	}
	\dfrac{(\hat{x}_i^{t} \, - \, \hat{x}_i^{\star})^2}
	{\norm{\hat{\psi}_i^{0} \, - \, \hat{\psi}_i^{\star}}_2^2} 
	\;=\;
	\norm{C_i A_i^t}_2^2,
	~~
	i \, = \, 1, \ldots, r.
	\label{eq.J1i}
	\eeq
This expression, however, does not hold when $\lambda_i=0$ in~\eqref{eq.statespace} because $\psi_i^{0}-\psi_i^{\star} $ is restricted to a line in $\R^2$. Namely, from~\eqref{eq.nulDynamics}, 
	\beq
	\ba{rcl}
	\hat{x}_i^{t} \, - \, \hat{x}_i^\star 
	& \!\!\! = \!\!\! &
	\dfrac{- \beta^t}{1 \, - \, \beta}  
	\,
	(\hat{x}^{1}_i \, - \, \hat{x}^{0}_{0})
	\\[0.25cm]
	\psi_i^{0} \, - \, \psi_i^{\star} 
	& \!\!\! = \!\!\! &
	\tbo{\hat{x}_i^{0} \, - \, \hat{x}_i^\star}{\hat{x}_i^{1} \, - \, \hat{x}_i^\star}
	\, = \,
	\dfrac{-(\hat{x}_i^{1} \, - \, \hat{x}_i^{0})}{1 \, - \, \beta}
	\tbo{1}{\beta}
	\ea
	\label{eq.zeroDynamics}
	\eeq
which, for any initial condition with $\hat{x}_i^{0}\neq \hat{x}_i^{1}$, leads to
	\beq
	\dfrac{(\hat{x}_i^{t} \, - \, \hat{x}_i^{\star})^2}{\norm{\psi_i^{0} \, - \, \psi_i^{\star}}_2^2}
	\, = \,
	\dfrac{\beta^{2t}}{1 \, + \, \beta^2},
	~~
	i \, = \, r+1, \ldots, n.
	\label{eq.J2i}
	\eeq
Finally, substitution of~\eqref{eq.J1i} and~\eqref{eq.J2i} to~\eqref{eq.tr1} yields~\eqref{eq.Jquad}.
	\end{IEEEproof}

	\vspace*{-2ex}
 \subsection{Analytical expressions for transient response}
\label{subsec.dimReduc}

We next derive analytical expressions for the state-transition matrix $A_i^t$ and the response matrix $\Phi_i (t) = C_i A_i^t$ in~\eqref{eq.statespace}.

	\vsp
	
\begin{mylem}\label{lem.powers}
	Let $\mu_1$ and $\mu_2$ be the eigenvalues of the matrix
	\[
	M 
	\, = \,
	\tbt{0}{1}{a}{b}
	\]
and let $t$ be a positive integer. For $\mu_1 \neq \mu_2$, 
	\begin{align*}	
	M^t 
	\, = \,
	\dfrac{1}{\mu_2 - \mu_1}\,
	\tbt
	{\mu_1 \mu_2 (\mu_1^{t-1} - \mu_2^{t-1})}{\mu_2^t - \mu_1^t}
	{\mu_1 \mu_2 (\mu_1^{t} - \mu_2^{t})}{\mu_2^{t+1} - \mu_1^{t+1}}.
	\end{align*}
Moreover, for $\mu \DefinedAs \mu_1 = \mu_2$, the matrix $M^t$ is determined by
	\beq
		M^t 
		\, = \,
		\tbt
		{(1-t) \, \mu^t}
		{t\,\mu^{t-1}}
		{-t\,\mu^{t+1}}
		{(t+1) \, \mu^t}.
		\label{eq.Mt}
	\eeq
\end{mylem}

Lemma~\ref{lem.powers} with $M = A_i$  determines explicit expressions for $A_i^t$. These expressions allow us to establish a bound on the norm of the response for each decoupled subsystem~\eqref{eq.statespace}. In Lemma~\ref{lem.Mblocks}, we provide a tight upper bound on $\| C_i A_i^t \|_2^2$ for each $t$ in terms of the spectral radius of the matrix $A_i$.  

	\vsp

\begin{mylem}\label{lem.Mblocks}
	The matrix $M$ in Lemma~\ref{lem.powers} satisfies
	\begin{align}\label{eq.specBound}
	\norm{\obt{1}{0}M^t}_2^2
	\, \le \,
	(t-1)^2 \rho^{2t} \, + \, t^2 \rho^{2t-2}
	\end{align}
where $\rho$ is the spectral radius of $M$. Moreover,~\eqref{eq.specBound} becomes equality if $M$ has repeated eigenvalues. 
\end{mylem}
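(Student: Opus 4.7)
The plan is to extract the first row of $M^t$ from the two cases of Lemma~\ref{lem.powers} and bound its squared Euclidean norm. The equality claim in the repeated-eigenvalue case falls out by direct substitution: with $\mu = \mu_1 = \mu_2$ and $\rho = |\mu|$, the formula gives $[1\;\,0]M^t = [(1-t)\mu^t,\, t\mu^{t-1}]$, so $\|[1\;\,0]M^t\|^2 = (t-1)^2|\mu|^{2t} + t^2|\mu|^{2t-2} = (t-1)^2\rho^{2t} + t^2\rho^{2t-2}$, matching the right-hand side of~\eqref{eq.specBound}. This disposes of the ``moreover'' statement and simultaneously suggests the correct bound to prove in the distinct case.

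For $\mu_1 \neq \mu_2$, Lemma~\ref{lem.powers} yields
\[
[1\;\,0]M^t \;=\; \left[\mu_1\mu_2\,\frac{\mu_1^{t-1}-\mu_2^{t-1}}{\mu_2-\mu_1},\ \frac{\mu_2^t-\mu_1^t}{\mu_2-\mu_1}\right],
\]
so the key is to bound the two divided-difference factors. The main tool is the factorization
\[
\frac{\mu_2^k-\mu_1^k}{\mu_2-\mu_1} \;=\; \sum_{j=0}^{k-1} \mu_1^{j}\,\mu_2^{k-1-j},
\]
valid for any positive integer $k$, which (by the triangle inequality and $|\mu_i| \le \rho$) yields
\[
\left|\frac{\mu_2^{k}-\mu_1^{k}}{\mu_2-\mu_1}\right| \;\le\; k\,\rho^{k-1}.
\]
I would apply this with $k = t-1$ and $k = t$ respectively. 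Combined with the bound $|\mu_1\mu_2| \le \rho^2$ (since the product of the two eigenvalues has modulus at most $\rho^2$), the squared first entry is at most $\rho^4 \cdot (t-1)^2 \rho^{2t-4} = (t-1)^2 \rho^{2t}$, and the squared second entry is at most $t^2 \rho^{2t-2}$. Adding the two contributions gives exactly~\eqref{eq.specBound}.

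The argument is essentially self-contained algebra once Lemma~\ref{lem.powers} is in hand. The only subtle point is that the eigenvalues may be a complex conjugate pair (as indicated by Figure~\ref{fig.locus}), so one must be careful to work with moduli throughout; the geometric-sum identity above is purely algebraic and therefore applies in $\mathbb{C}$, so no extra machinery is needed. The tightness in the repeated-eigenvalue limit is consistent with the elementary observation that $\sum_{j=0}^{k-1}\mu_1^j\mu_2^{k-1-j}$ approaches $k\mu^{k-1}$ as $\mu_1, \mu_2 \to \mu$, which is precisely where both of the triangle-inequality steps become equalities.
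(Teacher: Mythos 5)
Your proposal is correct and follows essentially the same route as the paper: both extract the first row of $M^t$ from Lemma~\ref{lem.powers}, rewrite the divided differences as geometric sums $\sum_j \mu_1^j\mu_2^{k-1-j}$, and apply the triangle inequality termwise with $|\mu_i|\le\rho$ to get the bounds $(t-1)\rho^t$ and $t\rho^{t-1}$ on the two entries. The only cosmetic difference is that you verify the equality claim by direct substitution into the repeated-eigenvalue formula for $M^t$, whereas the paper notes that the triangle inequalities become equalities when $\mu_1=\mu_2\in\R$; both are valid.
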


	\vsp
\begin{myrem}
For Nesterov's accelerated algorithm with the parameters that optimize the convergence rate (cf.\ Table~\ref{tab:rates}), the matrix $\hat{A}_r$, which corresponds to the smallest non-zero eigenvalue of $Q$, $\lambda_r = m$, has an eigenvalue $1 - 2/\sqrt{3 \kappa + 1}$ with algebraic multiplicity two and incomplete sets of eigenvectors. Similarly, for both $\lambda_1 = L$ and $\lambda_r = m$, $\hat{A}_1$ and $\hat{A}_r$ for the heavy-ball method with the parameters provided in Table~\ref{tab:rates} have repeated eigenvalues which are, respectively, given by $(1 - \sqrt{\kappa})/(1 + \sqrt{\kappa})$ and $-(1 - \sqrt{\kappa})/(1 + \sqrt{\kappa})$.  
\end{myrem}		
	\vsp

	We next use Lemma~\ref{lem.Mblocks} with $M=A_i$ to establish an analytical expression for $J(t)$.
	
	\vsp
	
\begin{mythm}\label{thm.sigmax}
	For accelerated algorithms applied to convex quadratic problems, $J(t)$ in~\eqref{eq.tr} satisfies
	\beq
	J^2 (t)
	\, \le \, 
	\max 
	\left\{ (t-1)^2 \rho^{2t} \, + \, t^2 \rho^{2(t-1)},  
	\,
	\beta^{2t}/(1 \, + \, \beta^2)
	\right\}
	\non
	\eeq
where $\rho \DefinedAs \max_{i \, \leq \, r} \rho (A_i)$. Moreover, for the parameters provided in Table~\ref{tab:rates} 
	\beq
	J^2 (t)
	\, = \, 
	(t-1)^2 \rho^{2t} \, + \, t^2 \rho^{2(t-1)} .
	\label{eq.main}
	\eeq	
\end{mythm}
	
Theorem~\ref{thm.sigmax} highlights the source of disparity between the long and short term behavior of the response. While the geometric decay of $\rho^t$ drives $x^t$ to $x^{\star}$ as $t\rightarrow\infty$,  early stages are dominated by the algebraic term which induces a transient growth. We next provide tight bounds on the time $t_{\max}$ at which the largest transient response takes place and the corresponding peak value $J(t_{\max})$. Even though we derive the explicit expressions for these two quantities, our tight upper and lower bounds are more informative and easier to interpret. 

	\begin{center}	
	\begin{figure}[t]
		\centering
		\begin{tabular}{@{\hspace{0 cm}}r@{\hspace{-0.4 cm}}c@{\hspace{-0.45 cm}}c}
			\begin{tabular}{c}
				\rotatebox{90}{$\norm{x^t}_2^2$}
			\end{tabular}
			&
			\begin{tabular}{c}
				\includegraphics[width=.21\textwidth]{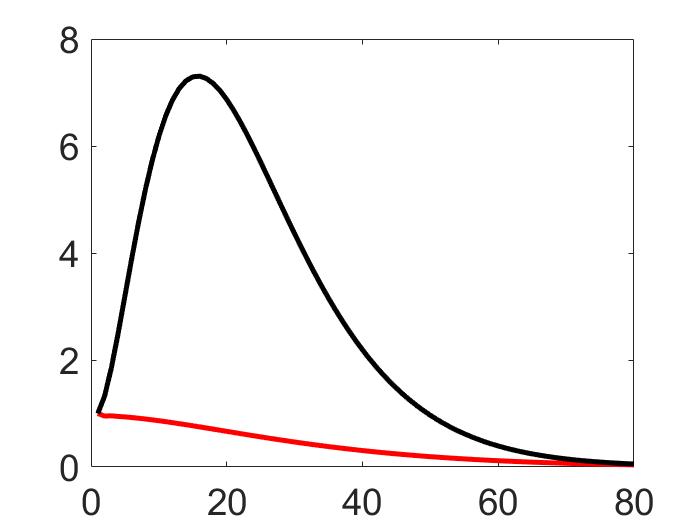}
				\\[-0.0 cm]
				{iteration number $t$}
			\end{tabular}
			&
			\begin{tabular}{c}
				\includegraphics[width=.21\textwidth]{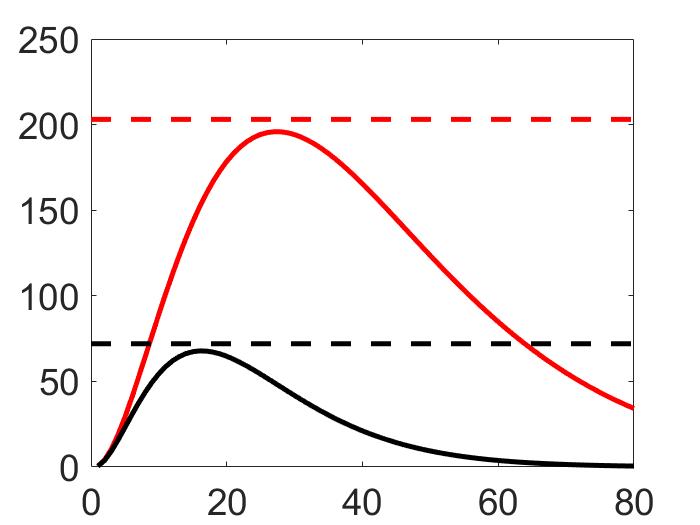}
				\\[-0.0cm]
				{iteration number $t$}
			\end{tabular}
			\\[-.05cm]
			&
			\begin{tabular}{cl}
				\begin{tabular}{c}
					\subfloat[\label{fig2.a}]{}	
				\end{tabular} & \hspace{-0.6 cm}
				\begin{tabular}{c}\vspace{-0.43 cm}
					$x^1 = x^0$
				\end{tabular}
			\end{tabular}
			&
			\begin{tabular}{cl}
				\begin{tabular}{c}
					\subfloat[\label{fig2.b}]{}	
				\end{tabular} & \hspace{-0.6 cm}
				\begin{tabular}{c}\vspace{-0.43 cm}
					$x^1=-x^0$
				\end{tabular}
			\end{tabular}
		\end{tabular}
		\caption{Dependence of the error in the optimization variable on the iteration number for the heavy-ball (black) and Nesterov's methods (red), as well as the peak magnitudes (dashed lines) obtained in Proposition~\ref{prop.kappa} for two different initial conditions with $\norm{x^1}_2 = \norm{x^0}_2 = 1$.}
		\label{fig.fig2}
	\end{figure}
\end{center}

	\vspace*{-2ex}
\begin{mythm}\label{thm.maxk}
For accelerated algorithms with the parameters provided in Table~\ref{tab:rates}, let $\rho \in [1/\mre,1)$. Then the rise time $t_{\max} \DefinedAs \argmax_t J(t)$ and the peak value $J(t_{\max})$ satisfy
\be
\label{eq.pposition}
\ba{c}
	- 
	{1}/{\log (\rho)}
	\; \le \;
	\ds t_{\max}
	\; \le \;
	1
	\, - \,
	{1}/{\log (\rho)}
	\\[0.3cm]
	-\dfrac{\sqrt{2} \rho}{\mre \log (\rho)}
	\; \le \;
	J(t_{\max})
	\; \le \;
	-\dfrac{\sqrt{2}}{\mre \, \rho \log (\rho)}.
	\ea
\non
\ee
\end{mythm}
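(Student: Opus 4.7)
The plan is to leverage Theorem~\ref{thm.sigmax}, which for the optimal parameters yields the equality $\norm{\Phi(t)}^2 = \phi(t)^2 + \psi(t)^2$ with $\phi(t) \DefinedAs t\rho^{t-1}$ and $\psi(t) \DefinedAs (t-1)\rho^t$. Treating $t$ as a continuous variable, the derivatives $\phi'(t) = \rho^{t-1}(1+t\log\rho)$ and $\psi'(t) = \rho^{t}(1+(t-1)\log\rho)$ show that $\phi$ is strictly unimodal with a unique maximum at $\bar t_\phi \DefinedAs -1/\log\rho$ and that $\psi$ is strictly unimodal with a unique maximum at $\bar t_\psi \DefinedAs 1 - 1/\log\rho = \bar t_\phi + 1$. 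Using the identity $\rho^{-1/\log\rho} = \mre^{-1}$, the corresponding peak values evaluate to $\phi_{\max} = -1/(\mre\rho\log\rho)$ and $\psi_{\max} = -\rho/(\mre\log\rho) = \rho^2 \phi_{\max}$. The hypothesis $\rho \geq 1/\mre$ guarantees $\bar t_\phi \geq 1$, placing both unimodal peaks within the iteration range.

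The bounds on $t_{\max}$ then come from a direct monotonicity argument. Since $\phi,\psi \geq 0$ on $[1,\infty)$, both functions are strictly increasing on $[1,\bar t_\phi]$ and both are strictly decreasing on $[\bar t_\psi,\infty)$, so the sum $g(t) \DefinedAs \phi(t)^2 + \psi(t)^2 = \norm{\Phi(t)}^2$ inherits the same monotonicity on the respective intervals. Therefore any maximizer of $g$ must lie in the length-one interval $[\bar t_\phi, \bar t_\psi]$, which is exactly the statement $-1/\log\rho \leq t_{\max} \leq 1 - 1/\log\rho$.

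The upper bound on $\norm{\Phi(t_{\max})}$ is immediate from the componentwise estimate $g(t) \leq \phi_{\max}^2 + \psi_{\max}^2 = (1+\rho^4)\phi_{\max}^2 \leq 2\phi_{\max}^2$, which upon taking square roots gives $\norm{\Phi(t_{\max})} \leq \sqrt{2}\,\phi_{\max} = -\sqrt{2}/(\mre\rho\log\rho)$. The lower bound is the main obstacle: since neither $\bar t_\phi$ nor $\bar t_\psi$ simultaneously maximizes both $\phi$ and $\psi$, I must exhibit a single point in $[\bar t_\phi,\bar t_\psi]$ at which $g$ is at least $2\psi_{\max}^2$. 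The plan is to evaluate $g$ at $t = \bar t_\psi$, where $\psi$ attains its global peak, and prove that the leftover contribution from $\phi$ satisfies $\phi(\bar t_\psi)^2 \geq \psi_{\max}^2$. A direct computation using $\rho^{-1/\log\rho}=\mre^{-1}$ gives $\phi(\bar t_\psi) = (1 - 1/\log\rho)/\mre$, whence $\phi(\bar t_\psi)^2/\psi_{\max}^2 = (1-\log\rho)^2/\rho^2$. For every $\rho \in [1/\mre, 1)$ we have $1 - \log\rho > 1 > \rho$, so this ratio strictly exceeds one; hence $g(t_{\max}) \geq g(\bar t_\psi) \geq 2\psi_{\max}^2$, and taking square roots delivers $\norm{\Phi(t_{\max})} \geq \sqrt{2}\,\psi_{\max} = -\sqrt{2}\rho/(\mre\log\rho)$, which completes the argument.
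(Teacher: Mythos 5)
Your proof is correct and mirrors the paper's own argument: both start from the equality established in Theorem~\ref{thm.sigmax}, split $\norm{\Phi(t)}^2$ into the two unimodal terms $t^2\rho^{2t-2}$ and $(t-1)^2\rho^{2t}$ whose peaks occur at $-1/\log\rho$ and $1-1/\log\rho$ with values $-1/(\mre\rho\log\rho)$ and $-\rho/(\mre\log\rho)$, and read off all four bounds from those peaks (your evaluation of the first term at $1-1/\log\rho$ for the lower bound is equivalent to the paper's pointwise inequality $(t-1)^2\rho^{2t}\le t^2\rho^{2t-2}$ specialized to that point). The only place you diverge is in localizing $t_{\max}$: you sandwich it by noting that both terms increase on $[1,-1/\log\rho]$ and decrease on $[1-1/\log\rho,\infty)$, whereas the paper checks the sign of $\mathrm{d}\norm{\Phi(t)}/\mathrm{d}t$ at the two endpoints and invokes uniqueness of the critical point; your version is slightly more self-contained since it avoids that unproven uniqueness claim.
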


For accelerated algorithms with the parameters provided in Table~\ref{tab:rates}, Theorem~\ref{thm.maxk} can be used to determine the rise time to the peak in terms of condition number $\kappa$. We next establish that both $t_{\max}$ and $J(t_{\max})$ scale as $\sqrt{\kappa}$. 

\vsp

\begin{myprop}
	\label{prop.kappa}
For accelerated algorithms with the parameters provided in Table~\ref{tab:rates}, the rise time $t_{\max} \DefinedAs \argmax_t J(t)$ and the peak value $J(t_{\max})$ satisfy
	\bi
	\item[(i)] Polyak's heavy-ball method with $\kappa\ge4.69$
\be
\ba{c}
	(\sqrt{\kappa} - 1)/2
	\; \le \;
	\ds t_{\max}
	\; \le \;
	(\sqrt{\kappa} + 3)/2
	\\[0.15cm]
	\dfrac{(\sqrt{\kappa} - 1)^2}{\sqrt{2} \, \mre(\sqrt{\kappa} + 1)}
	\; \le \;
	J(t_{\max})
	\; \le \;
	\dfrac{(\sqrt{\kappa} + 1)^2}{\sqrt{2} \, \mre(\sqrt{\kappa} - 1)}
	\ea
\non
\ee
	\item[(ii)] Nesterov's accelerated method with $\kappa\ge3.01$
\be
\ba{c}
	(\sqrt{3 \kappa + 1} - 2)/2
	\; \le \;
	\ds t_{\max}
	\; \le \;
	(\sqrt{3 \kappa + 1}  + 2)/2	
	\\[0.15cm]
	\dfrac{(\sqrt{3 \kappa + 1} - 2)^2}{\sqrt{2} \, \mre \sqrt{3 \kappa + 1}}
	\; \le \;
	J(t_{\max})
	\; \le \;
	\dfrac{3\kappa + 1}{\sqrt{2} \, \mre(\sqrt{3 \kappa + 1} - 2)}.
	\ea
\non
\ee
	\ei
\end{myprop}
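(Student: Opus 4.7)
\textbf{Plan of Proof for Proposition~\ref{prop.kappa}.}

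The plan is to apply Theorem~\ref{thm.maxk} directly, substituting the specific convergence rates $\rho$ from Table~\ref{tab:rates} and then bounding the quantity $-1/\log\rho$ using elementary logarithmic inequalities. The heavy lifting has already been done in Theorem~\ref{thm.maxk}, which provides tight upper and lower bounds on both $t_{\max}$ and $\|\Phi(t_{\max})\|$ solely as functions of $\rho$; what remains is to translate these bounds into sharp expressions in terms of the condition number $\kappa$.

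First, for each accelerated method I would write $\rho$ in the form $\rho = 1 - x$ where $x = x(\kappa)$ is small for $\kappa \gg 1$: namely $x = 2/(\sqrt{\kappa}+1)$ for Polyak's method and $x = 2/\sqrt{3\kappa+1}$ for Nesterov's method. Because $\kappa \gg 1$, we have $\rho \in [1/e,1)$, so the hypothesis of Theorem~\ref{thm.maxk} is satisfied. Next, I would invoke the standard two-sided bound
\[
x \;\le\; -\log(1-x) \;\le\; \frac{x}{1-x}, \qquad x \in (0,1),
\]
which yields
\[
\frac{1-x}{x} \;\le\; -\frac{1}{\log\rho} \;\le\; \frac{1}{x}.
\]
For Polyak this immediately gives $(\sqrt{\kappa}-1)/2 \le -1/\log\rho \le (\sqrt{\kappa}+1)/2$, and combining with the bound $-1/\log\rho \le t_{\max} \le 1 - 1/\log\rho$ from Theorem~\ref{thm.maxk} produces the claimed interval $[(\sqrt{\kappa}-1)/2,\,(\sqrt{\kappa}+3)/2]$ for $t_{\max}$. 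The computation for Nesterov's method is identical with $x = 2/\sqrt{3\kappa+1}$ and produces $[(\sqrt{3\kappa+1}-2)/2,\,(\sqrt{3\kappa+1}+2)/2]$.

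For the peak-value bounds I would combine the above estimates with the peak-value inequality from Theorem~\ref{thm.maxk},
\[
-\frac{\sqrt{2}\,\rho}{\mre\log\rho} \;\le\; \|\Phi(t_{\max})\| \;\le\; -\frac{\sqrt{2}}{\mre\,\rho\log\rho},
\]
and the elementary identities $\rho = (\sqrt{\kappa}-1)/(\sqrt{\kappa}+1)$ (Polyak) and $\rho = (\sqrt{3\kappa+1}-2)/\sqrt{3\kappa+1}$ (Nesterov). Multiplying the bounds on $\rho$ and $1/\rho$ by the bounds on $-1/\log\rho$ derived in the previous step and simplifying gives exactly the advertised expressions: $(\sqrt{\kappa}\mp 1)^2/(\sqrt{2}\,\mre(\sqrt{\kappa}\pm1))$ for Polyak and the corresponding two-sided bound for Nesterov.

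The proof is almost entirely routine once Theorem~\ref{thm.maxk} is in hand; the only place demanding a little care is choosing the correct direction of the logarithmic inequality on each side (so that the upper/lower bounds on $t_{\max}$ and $\|\Phi(t_{\max})\|$ are matched with the right bounds on $-1/\log\rho$ and on $\rho$), and verifying that the resulting constants collapse into the clean form stated in the proposition. I do not anticipate any genuine obstacle; the main task is careful bookkeeping of the two-sided estimates so that all four bounds in each item come out tight and symmetric in the $\pm$ signs.
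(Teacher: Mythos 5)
Your proposal is correct and follows essentially the same route as the paper: substitute the rates $\rho_{\mathrm{hb}} = 1 - 2/(\sqrt{\kappa}+1)$ and $\rho_{\mathrm{na}} = 1 - 2/\sqrt{3\kappa+1}$ into Theorem~\ref{thm.maxk} and control $-1/\log\rho$ via the two-sided bound $a \le -\log(1-a) \le a/(1-a)$. The resulting intervals for $t_{\max}$ and $\norm{\Phi(t_{\max})}$ match the paper's computation exactly.
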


	\vsp
	
In Proposition~\ref{prop.kappa}, the lower-bounds on $\kappa$ are only required to ensure that the convergence rate $\rho$ satisfies $\rho\ge1/\mre$, which allows us to apply Theorem~\ref{thm.maxk}. We also note that the upper and lower bounds on $t_{\max}$ and $J(t_{\max})$ are {\it tight\/} in the sense that their ratio converges to $1$ as $\kappa \rightarrow \infty$.
	
	\vspace*{-2ex}
\subsection{The role of initial conditions}
	\label{sec.initialConditions}

The accelerated algorithms need to be initialized with $x^0$ and $x^1 \in \R^n$. This provides a degree of freedom that can be used to potentially improve their transient performance. To provide insight, let us consider the quadratic problem with $Q=\diag \, (\kappa, 1)$. Figure~\ref{fig.fig2} shows the error in the optimization variable for Polyak's and Nesterov's algorithms as well as the peak magnitudes obtained in Proposition~\ref{prop.kappa} for two different types of initial conditions with $x^1=x^0$ and $x^1=-x^0$, respectively. For $x^1=-x^0$, both algorithms recover their worst-case transient responses. However, for $x^1=x^0$, Nesterov's method shows no transient growth.

Our analysis shows that large transient responses arise from the existence of non-normal modes in the matrices $A_i$. However, such modes do not move the entries of the state transition matrix $A_i^t$ in arbitrary directions. For example, using Lemma~\ref{lem.powers}, it is easy to verify that $A_r$ in~\eqref{eq.Ai}, associated with the smallest non-zero eigenvalue $\lambda_r = m$ of $Q$ in Nesterov's algorithm with the parameters provided by Table~\ref{tab:rates} has the repeated eigenvalue $\mu = 1-2/\sqrt{3\kappa+1}$ and $A_r^t$ is determined by~\eqref{eq.Mt} with $M = A_r$. Even though each entry of $A_r^t$ experiences a transient growth, its row sum is determined~by
\[
	A_r^t 
	\tbo{1}{1}
	= 
	\tbo{1 \, + \, 2t/(\sqrt{3\kappa+1} - 2)}{1 \, + \, 2t/\sqrt{3\kappa+1}}
	(1 \, - \, 2/\sqrt{3\kappa+1})^t
	\]
and entries of this vector are monotonically decaying functions of $t$. Furthermore, for $i < r$, it can be shown that the entries of $A_i^t \, [ \, 1 ~\, 1 \, ]^T$ remain smaller than $1$ for all $i$ and $t$. In Theorem~\ref{thm.noTransient}, we provide a bound on the transient response of Nesterov's method for {\em balanced\/} initial conditions with~$x^1=x^0$.

	\vsp
	
\begin{mythm}\label{thm.noTransient}
	For convex quadratic optimization problems, the iterates of Nesterov's accelerated method with a balanced initial condition $x^1 = x^0$ and parameters provided in Table~\ref{tab:rates} satisfy
	$
	\norm{x^t - x^\star}_2  \, \le \, \norm{x^0  - x^\star}_2.
	$
\end{mythm}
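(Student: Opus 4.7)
The plan is to leverage the spectral decoupling described in Section~\ref{subsec.dimReduc} to reduce the claim to a uniform scalar bound, and then to compute and control that scalar using the explicit power formula in Lemma~\ref{lem.powers}. Take $x^\star = 0$ without loss of generality, let $Q = V\Lambda V^T$, and write $\hat x^t := V^T x^t$. Under the unitary transformation the dynamics breaks into $n$ independent subsystems governed by the $2\times 2$ matrices $\hat A_i$, $\hat C_i = [\,1~0\,]$ of~\eqref{eq.Ahat}, and the balanced initial condition $x^1 = x^0$ transforms entry-wise into $\hat x_i^1 = \hat x_i^0$, so that $\hat\psi_i^0 = \hat x_i^0\, [\,1~1\,]^T$ for every $i$. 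Consequently $\hat x_i^t = \chi_t^{(i)}\, \hat x_i^0$ with $\chi_t^{(i)} := [\,1~0\,]\,\hat A_i^t\,[\,1~1\,]^T$, and unitarity of $V$ gives
\[
\|x^t - x^\star\|^2 \;=\; \sum_{i=1}^{n} \bigl(\chi_t^{(i)}\bigr)^2 |\hat x_i^0|^2 \;\le\; \Bigl(\max_i |\chi_t^{(i)}|\Bigr)^2 \, \|x^0 - x^\star\|^2.
\]
The theorem thus reduces to establishing the pointwise bound $|\chi_t^{(i)}| \le 3$ for every $i$ and every $t\in\bbN$.

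To obtain a closed form for $\chi_t^{(i)}$, I substitute $M = \hat A_i$ into Lemma~\ref{lem.powers} and take the first row. Writing $\gamma_i := 1-\alpha\lambda_i$, the eigenvalues of $\hat A_i$ solve $\mu^2 - (1+\beta)\gamma_i\,\mu + \beta\gamma_i = 0$, and when they are distinct
\[
\chi_t^{(i)} \;=\; \frac{(1-\mu_2^{(i)})\,(\mu_1^{(i)})^t \,-\, (1-\mu_1^{(i)})\,(\mu_2^{(i)})^t}{\mu_1^{(i)} - \mu_2^{(i)}}.
\]
Equivalently, $\chi_t^{(i)}$ satisfies the linear recursion with characteristic polynomial above and initial data $\chi_0 = \chi_1 = 1$. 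I will then bound $|\chi_t^{(i)}|$ by a case analysis on the discriminant $\gamma_i\bigl(\gamma_i(1+\beta)^2 - 4\beta\bigr)$. In the \emph{repeated-root case} (which, for both parameter choices in Tables~\ref{tab:ratesGeneral} and~\ref{tab:rates}, arises exactly at $\lambda_i = m$ and leaves no repeated root elsewhere), the second formula in Lemma~\ref{lem.powers} yields, after a short algebraic simplification analogous to~\eqref{eq.AhatSimpleForm}, the form $\chi_t^{(i)} = (1 + \epsilon(t-1))(1-\epsilon)^{t-1}$ with $\epsilon = 1-\mu$; a one-variable calculus check shows this is monotonically non-increasing on $t\ge 1$ and is bounded above by $1$. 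In the \emph{complex-conjugate case} $\mu^{(i)}_{1,2} = r_i\,e^{\pm i\theta_i}$ with $r_i = \sqrt{\gamma_i\beta}$, trigonometric simplification of the displayed formula above yields
\[
\chi_t^{(i)} \;=\; r_i^{t-1}\cos((t-1)\theta_i) \;+\; \frac{r_i^{t-1}(\cos\theta_i - r_i)}{\sin\theta_i}\,\sin((t-1)\theta_i),
\]
and the elementary inequality $|\sin(k\theta)/\sin\theta|\le k$ reduces the task to bounding $r_i^{t-1}$ and $(t-1)\,r_i^{t-1}|\cos\theta_i - r_i|$. In the residual \emph{real-distinct case} (relevant only for Table~\ref{tab:rates} parameters when $\gamma_i<0$), the same closed-form expression with $|\mu^{(i)}_{1,2}|<1$ is controlled directly by the same $r_i^{t-1}$ and $t\,r_i^{t-1}|1-\mu|$ envelope.

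The main obstacle, and the conceptual heart of the proof, lies in the complex case: a naive use of $|\sin(k\theta)/\sin\theta|\le k$ produces an $O(\sqrt{\kappa})$ bound, which recovers the worst-case transient growth identified in Theorem~\ref{thm.sigmax}. The key is a cancellation special to balanced initial conditions: using $r_i = \sqrt{\gamma_i \beta}$ and $r_i \cos\theta_i = \gamma_i(1+\beta)/2$ gives the identity $\cos\theta_i - r_i = \sqrt{\gamma_i}\,(1-\beta)/(2\sqrt\beta)$, which for the Table~\ref{tab:ratesGeneral} parameter $\beta = (\sqrt{\kappa}-1)/(\sqrt{\kappa}+1)$ collapses (via $\sqrt{\beta(\kappa-1)} = \sqrt{\kappa}-1$) to $|\cos\theta_i - r_i| = r_i/(\sqrt{\kappa}-1)$, and analogously for Table~\ref{tab:rates}. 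Substituting gives $|\chi_t^{(i)}| \le r_i^{t-1} + (t-1)\,r_i^t/(\sqrt\kappa-1)$, and then combining $r_i \le 1-1/\sqrt\kappa$ with the elementary bound $\max_{t\ge 1}(t-1)r^t \le r/\bigl(e(1-r)\bigr)$ yields $|\chi_t^{(i)}| \le 1 + 1/e < 3$. Verifying this $1/\sqrt\kappa$ scaling of $|\cos\theta_i - r_i|$ for both parameter tables, and tracking the residual real-distinct case for Table~\ref{tab:rates}, is the technical core; once done, Step 1 completes the proof.
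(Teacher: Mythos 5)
Your proposal is correct and follows the same skeleton as the paper's proof in Appendix~\ref{app.ProofNoTransient}: unitary decoupling into $2\times2$ blocks, reduction of the balanced-initial-condition claim to the scalar row-sum bound $|\,[\,1~0\,]\hat{A}_i^t[\,1~1\,]^T|\le 3$, the closed form from Lemma~\ref{lem.powers}, and a case split on repeated, complex-conjugate, and real-opposite-sign eigenvalues (your identification of which cases occur for which table is consistent with the paper's). The one genuine divergence is in the complex-conjugate case, which is where the paper's Lemma~\ref{lem.helperNoTransient} does its work. The paper writes $\omega_t = \bigl(r^{t-1}\sin(t\theta)-r^t\sin((t-1)\theta)\bigr)/\sin\theta$, splits off $r^{t-1}\bigl(\sin(t\theta)-\sin((t-1)\theta)\bigr)$, bounds the sine difference by $2|\sin(\theta/2)|$, and then needs the observation that the eigenvalues have non-negative real part (so $|\theta|\le\pi/2$) to turn $1/|\cos(\theta/2)|$ into $\sqrt{2}$; the residual term carries the factor $1-r$ and is killed by Lemma~\ref{lem.a(t)}. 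You instead expand $\sin(t\theta)$ by angle addition, isolate the coefficient $\cos\theta_i-r_i=\sqrt{\gamma_i}\,(1-\beta)/(2\sqrt{\beta})$, and verify the parameter-specific identities ($\sqrt{\beta(\kappa-1)}=\sqrt{\kappa}-1$ for Table~\ref{tab:ratesGeneral}, and its analogue for Table~\ref{tab:rates}) showing this coefficient is $r_i/(\sqrt{\kappa}-1)$ up to constants; combined with $r_i\le\rho$ and $\max_t (t-1)r^t\le r/(\mre(1-r))$ this gives $1+1/\mre$. I checked these identities and the bound $r_i\le 1-1/\sqrt{\kappa}$ (resp.\ $r_i\le\rho$ for Table~\ref{tab:rates}); they hold. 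Your route buys a slightly sharper constant and does not need the sign of the real part, at the cost of parameter-specific algebra where the paper's argument is parameter-agnostic. Two small points to tighten: for Table~\ref{tab:ratesGeneral} the repeated root also occurs at $\lambda_i=L$ (where $\mu=0$, harmless, but your ``exactly at $\lambda_i=m$'' is not quite right), and in the real-distinct case the bound of $3$ genuinely requires the opposite-sign structure $|\mu_1-\mu_2|=|\mu_1|+|\mu_2|$ (as in inequality~\eqref{eq.realz1z2} of the paper) — a mean-value/divided-difference envelope of the type $t\,r^{t-1}$ alone would only give the $O(\sqrt{\kappa})$ worst case, so make that dependence explicit.
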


	\vsp
	
\begin{IEEEproof}
	See Appendix~\ref{app.ProofNoTransient}.
\end{IEEEproof}

	\vsp
	
It is worth mentioning that the transient growth of the heavy-ball method cannot be eliminated with the use of balanced initial conditions. To see this, we note that the matrices $A_r^t$ and $A_1^t$ for the heavy-ball method with parameters provided in Table~\ref{tab:rates} also take the form in~\eqref{eq.Mt} with $\mu = (1 - \sqrt{\kappa})/(1 + \sqrt{\kappa})$ and $\mu = -(1 - \sqrt{\kappa})/(1 + \sqrt{\kappa})$, respectively. In contrast to $A_r^t\obt{1}{1}^T$, which decays monotonically, 
	\[
	A_1^t
	\tbo{1}{1}
	\, = \,
	\tbo{1 \, + \, 2t \sqrt{\kappa}/(1-\sqrt{\kappa})}
	{1 \, + \,  2t \sqrt{\kappa}/(1+\sqrt{\kappa})}
	\dfrac{(1-\sqrt{\kappa})^t}{(1+\sqrt{\kappa})^t} 
	\]
experiences transient growth. It was recently shown that an averaged version of the heavy-ball method experiences smaller peak deviation than the heavy-ball method~\cite{danmal21}. We also note that adaptive restarting provides effective heuristics for reducing oscillatory behavior of accelerated algorithms~\cite{doncan15}.

	 \vsp

	\begin{myrem}
	For accelerated algorithms with the parameters provided in Table~\ref{tab:rates}, the initial condition that leads to the largest transient growth at any time $\tau$ is determined by 
	\[
	\hat{\psi}_r^0
	\, = \,
	c
	\obt
	{(1- \tau) \, \rho^\tau}
	{\tau \rho^{\tau-1}}^T,
	~
	\hat{\psi}_i^0 \, = \, 0
	~
	\mbox{for}
	~ 
	i \, \neq \, r
	\]
where $c \neq 0$ and $\hat{\psi}_r^0$ is the principal right singular vector of $C_r A_r^\tau$. Thus, the largest peak $J (t_{\max})$ occurs for $\{ \hat{\psi}_i^0 = 0$, $i \neq r \}$ and
	$
	\hat{\psi}_r^0
	=
	c
	\obt
	{(1- t_{\max})\, \rho^{t_{\max}}}
	{t_{\max} \,\rho^{t_{\max}-1}}^T,
	$
	where tight bounds on $t_{\max}$  are established in Proposition~\ref{prop.kappa}. 	
\end{myrem}

	\vsp

	\begin{myrem}
For $\lambda_i=0$ in~\eqref{eq.statespace}, $|\hat{x}_i^t-\hat{x}_i^\star|$ decays monotonically with a linear rate $\beta$ and only non-zero eigenvalues of $Q$ contribute to the transient growth. Furthermore, for the parameters provided in Table~\ref{tab:rates}, our analysis shows that $J^2 (t) = \max_{i \, \le \, r} \norm{C_i A_i^t}_2^2$. In what follows, we provide bounds on the largest deviation from the optimal solution for Nesterov's algorithm for general strongly convex problems.
	\end{myrem}

	\vspace*{-1ex}
 \section{General strongly convex problems}
 	\label{sec.main}

In this section, we combine a Lyapunov-based approach with the theory of IQCs to provide bounds on the transient growth of Nesterov's accelerated algorithm for the class $\mathcal{F}_{\mf}^{\Lf}$ of $m$-strongly convex and $L$-smooth functions. When $f$ is not quadratic, first-order algorithms are no longer LTI systems and eigenvalue decomposition cannot be utilized to simplify analysis. Instead, to handle nonlinearity and obtain upper bounds on $J$ in~\eqref{eq.tr}, we augment standard quadratic Lyapunov functions with the objective error. 

	For $f \in \mathcal{F}_{\mf}^{\Lf}$, algorithm~\eqref{eq.1st-order} is invariant under translation. Thus, without loss of generality, we assume that $x^\star = 0$ is the unique minimizer of~\eqref{eq.optProb} with $f(0)=0$. In what follows, we present a framework based on Linear Matrix Inequalities (LMIs) that allows us to obtain time-independent bounds on the error in the optimization variable. This framework combines certain IQCs~\cite{megran97} with Lyapunov \mbox{functions of the form}
\begin{equation}\label{eq.LyapForm}
	V(\psi) 
	\; = \;
	\psi^T X \psi 
	\; + \;
	\theta f(C \psi)
\end{equation}
which consist of the objective function evaluated at $C \psi$ and a quadratic function of $\psi$, where $X$ is a positive definite matrix. 

The theory of IQCs provides a convex control-theoretic approach to analyzing optimization algorithms~\cite{lesrecpac16} and it was recently employed to study convergence and robustness of the first-order methods~\cite{hules17,fazribmor18,cyrhuvanles18,dhikhojovTAC19,mohrazjovTAC21,mogjovAUT21}. The type of Lyapunov functions in~\eqref{eq.LyapForm} was introduced in~\cite{polshc17,fazribmor18} to study convergence for convex problems. For Nesterov's accelerated algorithm, we demonstrate that this approach provides {\em orderwise-tight\/} analytical upper bounds on $J(t)$.

Nesterov's accelerated algorithm can be viewed as a feedback interconnection of linear and nonlinear components 
\begin{subequations}
	\label{eq.NesInSS}
	\beq
	\ba{rcl}
	\psi^{t+1}
	& \!\!\! = \!\!\! &
	A \, \psi^t
	\; + \;
	B\, u^t
	\\[0.1cm]
	y^t
	& \!\!\! = \!\!\! &
	C_y \,
	\psi^t,
	\quad
	u^{t}\;=\;
	\Delta ( y^t )
	\ea
	\label{eq.ss-ns}
	\eeq
where the LTI part of the system is determined by
	\be
	\ba{rcl}
	A
	& \!\!\! = \!\!\! &
	\tbt{0}{I}{-\beta I}{(1+\beta) I},~~
	B
	\, = \,
	\tbo{0}{-\alpha I},~~
	C_y
	\, = \,
	\obt{-\beta I}{(1 + \beta)I} 
	\label{eq.ss-na}
	\ea
	\ee 
and the nonlinear mapping $\Delta$$:\R^n\rightarrow\R^n$ is 
	$
	\Delta (y)
	\DefinedAs
	\nabla f(y).
	$ 
Moreover, the state vector $\psi^t$ and the input $y^t$ to $\Delta$ are determined by 
	\beq
	\psi^t 
	\, \DefinedAs \, 
	\left[ 
	\ba{c} {x^t} \\[-0.cm]{x^{t+1}} \ea
	\right],
	~~
	y^t
	\, \DefinedAs \, 
	(1 + \beta)  x^{t+1}
	\, - \, 
	\beta x^t.
	\label{eq.state-output-Nesterov}
	\eeq 
\end{subequations}
For smooth and strongly convex functions $f\in\mathcal{F}_\mf^\Lf$, $\Delta$ satisfies the quadratic inequality~\cite[Lemma 6]{lesrecpac16}
	\begin{subequations}
	\label{eq.PI-iqc}
	\be
	\tbo{y \,-\, y_0}{\Delta (y) \,-\, \Delta(y_0)}^T
	\!\Pi
	\tbo{y \,-\, y_0}{\Delta (y) \,-\, \Delta(y_0)}
	\;\geq\; 
	0
	\label{eq.IQC}
	\eeq
	for all $y$, $y_0 \in \bbR^n$, where the matrix $\Pi$ is given by
	\begin{align}\label{eq.PI}
		\Pi 
		\;\DefinedAs\;
		\tbt{-2 \mf \Lf I}{(\Lf + \mf)I }{(\Lf + \mf)I}{-2I}.
	\end{align}
Using $u^t \DefinedAs \Delta (y^t)$ and $y^t \DefinedAs C_y \psi^t$ and evaluating~\eqref{eq.IQC} at $y = y^t$ and $y_0 = 0$ leads to,
	\be
	\tbo{\psi^t}{u^t}^T
	\!
	M_1
	\tbo{\psi^t}{u^t}
	\;\geq\; 
	0
	\label{eq.IQC1}
	\eeq
where
	\beq
	\ba{rrl}
	M_1 
	& \!\!\! \DefinedAs \!\!\! &
	\tbt{C_y^T}{0}{0}{I}\Pi \tbt{C_y}{0}{0}{I}
	\, = \,
	\tbt{-2 \mf \Lf C_y^T C_y}{(\Lf + \mf) C_y^T }{(\Lf + \mf) C_y}{-2I}.
	\ea
	\label{eq.M1}
	\eeq		
	\end{subequations}
 
In Lemma~\ref{lem.Mahyar}, we provide an upper bound on the difference between the objective function at two consecutive iterations of Nesterov's algorithm. In combination with~\eqref{eq.PI-iqc}, this result allows us to utilize Lyapunov function of the form~\eqref{eq.LyapForm} to establish an upper bound on transient growth. We note that variations of this lemma have been presented in~\cite[Lemma~5.2]{fazribmor18} and in~\cite[Lemma~3]{mohrazjovTAC21}.
	
	\vsp
	
\begin{mylem}
	\label{lem.Mahyar}
	Along the solution of Nesterov's accelerated algorithm~\eqref{eq.NesInSS}, the function $f\in\mathcal{F}_{\mf}^{\Lf}$ with $\kappa\DefinedAs\Lf/\mf$ satisfies
	\begin{subequations}
	\label{eq.f-lemma}
	\beq
		f(x^{t+2}) \, - \, f(x^{t+1}) 
		\; \le \;
		\dfrac{1}{2} 
		\tbo{\psi^t}{u^t}^T
		M_2
		\tbo{\psi^t}{u^t}
		\label{eq.f-lemma1}
\eeq
where the matrix $M_2$ is given by
	\beq
	\ba{rcl}
	M_2
	&\!\!\! \DefinedAs \!\!\! &
	\tbt{- \mf C_2^T C_2}{C_2^T}{C_2}{-\alpha(2 - \alpha L)I},
	~~
	C_2
	\, \DefinedAs \,
	\obt{-\beta I}{\beta I}.
	\ea
	\label{eq.M2}
	\eeq
	\end{subequations}		
	\end{mylem}

Using Lemma~\ref{lem.Mahyar}, we next demonstrate how a Lyapunov function of the form~\eqref{eq.LyapForm} with $\theta \DefinedAs 2 \theta_2$ and $C \DefinedAs [\, 0 ~\, I \, ]$ in conjunction with property~\eqref{eq.PI-iqc} of the nonlinear mapping $ \Delta $  can be utilized to obtain an upper bound on $\norm{x^t}_2^2$.

	\vsp
	
\begin{mylem}
	\label{lem.generalNester}
	Let $M_1$ be given by~\eqref{eq.M1} and let $M_2$ be defined in Lemma~\ref{lem.Mahyar}. Then, for any positive semi-definite matrix $ X $ and nonnegative scalars $ \theta_1$ and $\theta_2$ that satisfy
	\beq
	W 
	\, \DefinedAs \,
	\tbt{A^T X\, A - X }{A^T X\, B}{B^T\, X\, A}{B^T\, X\, B}
	\, + \,
	\theta_1 
	M_1
	\, + \,
	\theta_2
	M_2
	\, \preceq \,
	0
	\label{eq.LMI2}
	\eeq
the transient growth of Nesterov's accelerated algorithm~\eqref{eq.NesInSS} for all $t \ge 1$ is upper bounded by 
	\begin{align}\label{eq.RHSup}
	\norm{x^t}_2^2
	\, \leq \,
	\dfrac{ \lambda_{\max}(X)\norm{x^0}_2^2 
	\, + \, 
	(\lambda_{\max}(X) + L\theta_2) \norm{x^{1}}_2^2 }{\lambda_{\min}(X) + m\theta_2}.
	\end{align}
\end{mylem} 
In Lemma~\ref{lem.generalNester}, the Lyapunov function candidate $V(\psi)\DefinedAs\psi^TX\psi + 2\theta_2 f ([\, 0~I \,] \psi)$ is used to show that the state vector $\psi^t$ is confined within the sublevel set 
	$
	\{\psi \in \R^{2n} \, | \, V(\psi) \le V(\psi^0)\}
	$
associated with $V(\psi^0)$. We next  establish an {\em order-wise\/} tight upper bound on $\norm{x^t}_2$ that scales linearly with $\sqrt{\kappa}$ by finding a feasible point to LMI~\eqref{eq.LMI2} in Lemma~\ref{lem.generalNester}.
	
	\vsp
		
\begin{mythm}\label{thm.main}
	For $f \in \mathcal{F}_{\mf}^{\Lf}$ with the condition number $\kappa \DefinedAs L/m$, the iterates of Nesterov's accelerated algorithm~\eqref{eq.NesInSS} for any stabilizing parameters $\alpha \leq 1/L$ and $\beta < 1$ satisfy
	\begin{subequations}
		\label{eq.thm.main}
	\be
	\label{eq.hesUp}
	\norm{x^t}_2^2
	\, \le \,
	\kappa
	\left(
	\dfrac{1 + \beta^2}{\alpha \beta L} 
	\, 
	\norm{x^0}_2^2 
	\, + \,
	(1 + \dfrac{1 + \beta^2}{\alpha \beta L} )
	\, 
	\norm{x^1}_2^2
	\right).
	\ee
	Furthermore, for the conventional values of parameters
	\beq
	\label{eq.convParam}
	\alpha \, = \, 1/L,
	~
	\beta \, = \, (\sqrt{\kappa}-1)/(\sqrt{\kappa}+1)
	\eeq	
	the largest transient error, defined in~\eqref{eq.tr}, satisfies
	\beq	
	\dfrac{\sqrt{2} \, (\sqrt{\kappa} - 1)^2}{\mre \sqrt{\kappa}}
	\, \le 
	\sup_{\{ t\,\in\,\mathbb{N}, \, f\,\in\,\mathcal{F}_m^L \}} J (t)
	\, \le \,
	\sqrt{3 \kappa + \dfrac{4 \kappa}{\kappa - 1}}.
	\label{eq.upperLowerGeneralSCVX}
	\eeq
\end{subequations}
\end{mythm}
	
	For balanced initial conditions, i.e., $x^1 = x^0$, Nesterov established the upper bound $\sqrt{\kappa +1}$ on $J$ in~\cite{nes18book}. Theorem~\ref{thm.main} shows that similar trends hold without restriction on initial conditions. Linear scaling of the upper and lower bounds with $\sqrt{\kappa}$ illustrates a potential drawback of using Nesterov's accelerated algorithm in applications with limited time budgets. As $\kappa \rightarrow \infty$, the ratio of these bounds converges to $\mre \sqrt{3/2} \approx 3.33$, thereby demonstrating that the largest transient response for all $f \in \mathcal{F}_m^L$ is within the factor of $3.33$ relative to the bounds established in Theorem~\ref{thm.main}. 

	\newpage
\section{Concluding remarks}
\label{sec.conc}

We have examined the impact of acceleration on transient responses of first-order optimization algorithms. Without imposing restrictions on initial conditions, we establish bounds on the largest value of the Euclidean distance between the optimization variable and the global minimizer. For convex quadratic problems, we utilize the tools from linear systems theory to fully capture transient responses and for general strongly convex problems, we employ the theory of integral quadratic constraints to establish an upper bound on transient growth. This upper bound is proportional to the square root of the condition number and we identify quadratic problem instances for which accelerated algorithms generate transient responses which are within a constant factor of this upper bound. Future directions include extending our analysis to nonsmooth optimization problems and devising algorithms that balance acceleration with quality of transient responses. 

	\vspace*{-1ex}
\appendix

\subsection{Proofs of Section~\ref{sec.trans}}
\label{app.lemmaProofs}
We first present a technical lemma that we use in our proofs.

	\vsp
	
\begin{mylem}\label{lem.a(t)}
	For any  $\rho\in[1/\mre,1)$,  $a(t)\DefinedAs t\rho^t$ satisfies
	\[
	\argmax_{t\,\ge\, 1} \, a(t) 
	\, = \, 
	{-1}/{\log(\rho)},
	~
	\max_{t\,\ge\,1} \, a(t)
	\, = \, 
 	{-1}/{(\mre\log(\rho))}.
	\]
\end{mylem}
\begin{IEEEproof}
	Follows from the fact that  $\mrd a/\mrd t= \rho^t(1+t\log( \rho))$ vanishes at $t=-1/\log(\rho)$.
\end{IEEEproof}

\subsubsection{Proof of Lemma~\ref{lem.powers}}
For $\mu_1\neq\mu_2$, the eigenvalue decomposition of $M$ is determined by
\begin{align*}
M
\, = \,
\dfrac{1}{\mu_2 - \mu_1}
\tbt
{1}
{1}
{\mu_1}
{\mu_2}
\tbt
{\mu_1}
{0}
{0}
{\mu_2}
\tbt
{\phantom{-}\mu_2}
{-1}
{-\mu_1}
{\phantom{-}1}.
\end{align*}
Computing the $t$th power of the diagonal matrix and multiplying throughout completes the proof for $\mu_1\neq\mu_2$. For $\mu_1 = \mu_2 \AsDefined \mu$, $M$ admits the Jordan canonical form	
\begin{align*}
M
\;=\;
\tbt
{1}
{0}
{\mu}
{1}
\tbt
{\mu}
{1}
{0}
{\mu}
\tbt
{\phantom{-}1}
{0}
{-\mu}
{1}
\end{align*}
and the proof follows from
\begin{align*}
\tbt
{\mu}
{1}
{0}
{\mu}^t
	= \,	
\tbt
{\mu^t}
{t \, \mu^{t-1}}
{0}
{\mu^t}.
\end{align*}

\subsubsection{Proof of Lemma~\ref{lem.Mblocks}}
From Lemma~\ref{lem.powers}, it follows
\begin{align*}
\obt{1}{0}M^t
\,=\,
\obt{-\ds{\sum_{i \, = \, 0}^{t-2}} \; \mu_1^{i+1}\mu_2^{t-1-i}}{\ds{\sum_{i \, = \, 0}^{t-1}} \; \mu_1^i\mu_2^{t-1-i}},
\end{align*}
where $\mu_1$ and $\mu_2$ are the eigenvalues of $M$. Moreover,
\begin{align*}
|\sum_{i \, = \, 0}^{t-2} \mu_1^{i+1} \mu_2^{t-1-i}|
& \le
\sum_{i \, = \, 0}^{t-2} |\mu_1^{i+1} \mu_2^{t-1-i}|
\le
\sum_{i \, = \, 0}^{t-2} \rho^t
\le\!
(t-1)\rho^t
\\[0.1cm]
|\sum_{i \, = \, 0}^{t-1} \mu_1^{i} \mu_2^{t-1-i}|
&\le
\sum_{i \, = \, 0}^{t-1} |\mu_1^{i} \mu_2^{t-1-i}|
\le
\sum_{i \, = \, 0}^{t-1}\rho^{t-1}
\le
t\rho^{t-1}
\end{align*}
by triangle inequality. Finally, for  $\mu_1=\mu_2\in\R$, we have $\rho =  |\mu_1|=|\mu_2|$ and these inequalities become equalities. 

\subsubsection{Proof of Theorem~\ref{thm.sigmax}}
	 Let $\mu_{1i}$ and $\mu_{2i}$ be the eigenvalues and let
	$
	\rho_i 
	=
	\max \, \{|\mu_{1i}|,|\mu_{2i}|\}
	$
be the spectral radius of $A_i$. We can use Lemma~\ref{lem.Mblocks} with $M \DefinedAs A_i$ to obtain
	\beq
	\ba{rcl}
	\displaystyle{\max_{i \, \le \, r}}  
	\;
	\norm{ C_iA_i^t }_2^2
	& \!\!\! \le \!\!\! &
	\displaystyle{\max_{i \, \le \, r}}  
	\;
	\left( (t-1)^2 \rho_i^{2t} \, + \, t^2 \rho_i^{2t-2} \right)
 	\\[0.15cm]
	 & \!\!\! \le \!\!\! &
	(t-1)^2 \rho^{2t} \, + \, t^2 \rho^{2t-2}
	\ea
	\label{eq.leftIneq}
	\eeq
where $\rho\DefinedAs\max_{i\le r} \rho_i$. For the parameters provided in Table~\ref{tab:rates}, the matrices $A_1$ and $A_r$, that correspond to the largest and smallest non-zero eigenvalues of $Q$, i.e.,  $\lambda_1 = L$ and $\lambda_r = m$, respectively,  have the largest spectral radius~\cite[Eq.~(64)]{mohrazjovTAC21},
	\begin{align}
	\label{eq.maxrho}
	\rho
	\, = \,
	\rho_1 
	\, = \,
	\rho_r
	\, \ge \,
	\rho_i,
	~
	i \, = \, 2,\ldots,r-1
	\end{align}
and $A_r$ has repeated eigenvalues. Thus, we can write 	
	\beq
	\ba{l}
	\displaystyle{\max_{i \, \le \, r}} 
	\;
	\norm{ C_iA_i^t }_2^2
	\, \ge \,
 	\norm{ \obt{1}{0} \! A_r^t}_2^2
 	~=
 	\\[0.1cm]
	(t-1)^2 \rho_r^{2t} \, + \, t^2 \rho_r^{2t-2}
	\, = \,
	(t-1)^2 \rho^{2t} \, + \, t^2 \rho^{2t-2}
	\label{eq.rightIneq}
	\ea
	\eeq
where the first equality follows from Lemma~\ref{lem.Mblocks} applied to $M \DefinedAs A_r$ and the second equality follows from~\eqref{eq.maxrho}. Finally, combining~\eqref{eq.leftIneq} and~\eqref{eq.rightIneq} with $\beta<\rho$ and Proposition~\ref{prop.Jquad} completes the proof.

\subsubsection{Proof of Theorem~\ref{thm.maxk}}
Let $a(t) \DefinedAs t \rho^t$.  Theorem~\ref{thm.sigmax} implies
$
J^2(t)
=
\rho^2a^2(t-1)
+ 
\rho^{-2}a^2(t)
$ 
and, for $t\ge1$, $J(t)$ has only one critical point, which is a maximizer. Moreover, since $\mrd J^2(t)/{\mrd t}$ is positive at $t=-1/\log (\rho)$ and negative at $t=1-1/\log (\rho)$, we conclude that the maximizer lies between $-1/\log (\rho)$ and  $1-1/\log( \rho)$.
Regarding $\max_t J(t)$, we note that $
\sqrt{2}\rho a(t-1)
\le
J(t)
\le
\sqrt{2}a(t)/\rho
$
and the proof follows from $\max_{t \ge 1} a(t)= - 1/({\mre\log( \rho)})$ (cf.\ Lemma~\ref{lem.a(t)}). 

\subsubsection{Proof of Proposition~\ref{prop.kappa}}

Since for all $a \le 1$, we have~\cite{top06}
	\[
a \, \le \, -\log \, (1-a) \, \le \, {a}/{(1-a)}
	\]
$
\rho_\hb 
=  
1  -  {2}/({\sqrt{\kappa} + 1})
$
and 
$
\rho_\na 
= 
1  -  {2}/({\sqrt{3\kappa+1}})
$ 
satisfy
\begin{align*}
{2}/{(\sqrt{\kappa}+1)}&\;\le\;-\log(\rho_\hb) \;\le\;{2}/{(\sqrt{\kappa}-1)}
\\[0.15 cm]
{2}/{\sqrt{3\kappa+1}}&\;\le\;-\log(\rho_\na) \;\le\;{2}/{(\sqrt{3\kappa+1}-2)}.
\end{align*}
The conditions on $\kappa$ ensure that $\rho_\hb$ and $\rho_\na$ are not smaller than $1/\mre$ and we combine the above bounds with Theorem~\ref{thm.maxk} to complete the proof.
 
	\vspace*{-2ex}
\subsection{Proof of Theorem~\ref{thm.noTransient}}
\label{app.ProofNoTransient}

The condition $x_0=x_1$ is equivalent to $\hat{x}^0_i=\hat{x}^1_i$ in~\eqref{eq.statespace}. Thus, for $\lambda_i=0$, equation~\eqref{eq.zeroDynamics} yields
$
\hat{x}_i^t=\hat{x}_i^0=\hat{x}_i^\star.
$
For $\lambda_i\neq 0$, we have $\hat{\psi}_i^0-\hat{\psi}_i^\star = \obt{\hat{x}_i^0}{\hat{x}_i^0}^T$ and, hence,
	\begin{subequations}\label{eq.temp3+4}
	\beq
		\dfrac{\norm{x^t-x^\star}_2}{\norm{x^0-x^\star}_2} 
		\, \le \,
		\max_{i\,\le\, r}
		\dfrac{\abs{\hat{x}^t_i-\hat{x}_i^\star}}{\abs{\hat{x}^t_0-\hat{x}_i^\star}}
		\, = \,
		\max_{i\,\le\, r}~\abs{C_iA_i^t\tbo{1}{1}}
		\label{eq.temp3}
	\eeq	
where the equality follows from~\eqref{eq.xi-i}. 
To bound the right-hand side, we use  Lemma~\ref{lem.powers} with $M=A_i$ to obtain \begin{align}\label{eq.temp4}
	\omega_t(\mu_{1i},\mu_{2i})\,=\,\obt{1}{0}A_i^t\obt{1}{1}^T
\end{align}
\end{subequations}
  where $\mu_{1i}$ and $\mu_{2i}$ are the eigenvalues of $A_i$ and 
\begin{align}\label{eq.omega}
	\omega_t(z_1,z_2) 
	\, \DefinedAs \,
	\sum_{i \, = \, 0}^{t-1}z_1^iz_2^{t-1-i} \, - \, \sum_{i \, = \, 1}^{t-1}z_1^iz_2^{t-i}
\end{align}
for any $t\in \N$ and $z_1,z_2\in\mathbb{C}$. 

For Nesterov's accelerated method, the characteristic polynomial  
$
\det(zI-A_i) = z^2-(1+\beta)h_iz + \beta h_i
$ yields
$
\mu_{1i},\mu_{2i}= ((1+\beta)h_i \pm \sqrt{(1+\beta)^2h_i^2-4\beta h_i} )/2
$,
where $\lambda_i$ is the eigenvalue of $Q$ and $h_i\DefinedAs1-\alpha\lambda_i$. For the parameters provided in Table~\ref{tab:rates}, it is easy to show that:
\begin{itemize}
	\item For $\lambda_i\in[m,1/\alpha]$, we have $h_i\in[0,4\beta/(1+\beta)^2]$ and $\mu_{1i}$ and $\mu_{2i}$ are complex conjugates of each other and lie on a circle of radius $\beta/(1+\beta)$ centered at $z=\beta/(1+\beta)$.
	\item  For  $\lambda_i\in(1/\alpha, L]$, $\mu_{1i}$ and $\mu_{2i}$ are real with opposite signs and can be sorted to satisfy $\abs{\mu_{2i}}<\abs{\mu_{1i}}$ with $-1\le \mu_{1i}\le0\le\mu_{2i}\le1/3$.
\end{itemize} 

The next lemma provides a unit bound on $\abs{w_t(\mu_{1i},\mu_{2i})}$ for both of the above cases. 
 
 	\vsp
	
\begin{mylem}
	\label{lem.helperNoTransient}
For any $z=l\cos(\theta)\mre^{i\theta}\in\mathbb{C}$ with $|\theta|\le \pi/2$ and $0\le l\le1$, and for any real scalars $(z_1,z_2)$ such that $-1\le z_1\le0\le z_2\le1/3$,  and $z_2<-z_1$, the function $\omega_t$ in~\eqref{eq.omega} satisfies 
	$
	\abs{\omega_t(z,\bz)}
	\le 
	1
	$
and
	$
	\abs{\omega_t(z_1,z_2)}
	\le 
	1
	$
for all $t\,\in\,\N$, where $\bar{z}$ is the complex conjugate of $z$.
\end{mylem}

	\vsp
	
\begin{IEEEproof}
	Since $\omega_1(z_1,z_2)=1$, we assume $t\ge 2$.
	We first address $\theta=0$, i.e.,  $z=l \in \bbR$ and
	$\omega_t(z,\bz)= tl^{t-1} - (t-1)l^t$. We note that
	$
	{\mrd \omega_t}/{\mrd l}
	=
	t(t-1)(l^{t-2}-l^{t-1}) = 0
	$
only if $l\in\{0,1\}$. This in combination with $l\in[0,1]$ yield 
	$\abs{\omega_t(l,l)}\le\max\{\abs{\omega_t(1,1)},\abs{\omega_t(0,0)}\}\le1.$	
	
	To address $\theta\neq 0$, we note that $b(t)\DefinedAs \sin(t\theta)/t$ satisfies
	\begin{align}
	\label{eq.helperSinttheta}
		\abs{b(t)}
		\;\le\;\abs{\sin(\theta)}
	\end{align}
which follows from
	\begin{align*}
		\abs{\sin(t\theta)}
		&\;=\;
		\abs{\sin((t-1)\theta)\cos(\theta) \, + \, \cos((t-1)\theta)\sin(\theta)}
		\;\le\;
		\abs{\sin((t-1)\theta)} \, + \, \abs{\sin(\theta)}.
	\end{align*} 
	For $z=l\cos(\theta)\mre^{i\theta}$, we have
	\beq
		\omega_t(z,\bz) 
		\, = \,
	({z^t-\bz^t - z\bz(z^{t-1}-\bz^{t-1})} )/{(z-\bz)}
	\, = \,
		(l\cos(\theta))^{t-1} ({\sin(t\theta) - l\cos(\theta)\sin((t-1)\theta)} )/{\sin(\theta)}.
		\non
	\eeq
	Thus, ${\mrd \omega_t}/{\mrd l}=0$ only if $l=0$, $1$, or $l^\star\DefinedAs b(t)/(b(t-1)\cos(\theta))$. Moreover, it is easy to show that 
	\begin{align*}
		\omega_t(z,\bz)\;=\; \left\{
		\ba{lrcl}
		0,
		&l&\!\!\!=\!\!\!&0\\[0.1cm]
		( \cos(\theta) ) ^{t-1}\cos((t-1)\theta),
		&l&\!\!\!=\!\!\!&1\\[0.1cm]
		{(l^\star\cos(\theta))^{t-1}b(t)}/{ \sin(\theta)},
		&l &\!\!\!=\!\!\!&l^\star. 
		\ea
		\right.
	\end{align*}
	Combining this with~\eqref{eq.helperSinttheta} completes the proof for complex $z$. 
	
	To address the case of $z_1$, $z_2 \in \bbR$, we note that
	$
		\omega_t(z_1,z_2) 
		=
		{\left(z_1^t(1-z_2)-z_2^t(1-z_1) \right)}/{(z_1-z_2)}.
	$
	Thus, differentiating with respect to $z_1$ yields 
	\begin{align*}
		\dfrac{\mrd \omega_t}{\mrd z_1}
		\;=\;(1-z_2)
		\dfrac{(t-1)z_1^{t-1}-z_2\sum_{i \, = \, 0}^{t-2}z_1^{t-2-i}z_2^i}{z_1-z_2}.
	\end{align*}
Moreover, from  $\abs{z_2} < \abs{z_1}$, it follows that
\begin{align*}
	(t-1)\abs{z_1^{t-1}}
	\;>\;
	\abs{z_2}\sum_{i \, = \, 0}^{t-2}\abs{z_1^{t-2-i}z_2^i}
	\;>\;
	\abs{z_2\sum_{i \, = \, 0}^{t-2}z_1^{t-2-i}z_2^i}.
\end{align*}
Therefore, ${\mrd \omega_t}/{\mrd z_1}\neq 0$ over our range of interest for $z_1, z_2$. Thus, $\omega_t(z_1,z_2)$ may take its extremum only at the boundary $z_1\in\{0,-1\}$, i.e.
	$\abs{\omega_t(z_1,z_2)}\le
	\max\{
	\abs{\omega_t(0,z_2)},\abs{\omega_t(1,z_2)}\}.
	$
Finally, it is easy to show that $\abs{\omega_t(0,z_2)} =\abs{z_2^{t-1}}<1$, and 
$
	\abs{\omega_t(-1,z_2)}
	=
	{\abs{(-1)^t(z_2-1)+2z_2^t}}/{(1+z_2)}
	\le
	1.
$
\end{IEEEproof}

	\vsp
	
We complete the proof of Theorem~\ref{thm.noTransient} by noting that the eigenvalues of $A_i$ for Nesterov's algorithm with parameters provided in Table~\ref{tab:rates} satisfy the conditions in Lemma~\ref{lem.helperNoTransient}.

	\vspace*{-2ex}
\subsection{Proofs of Section~\ref{sec.main}} 
\label{app.proofLMI}
 
\subsubsection{Proof of Lemma~\ref{lem.Mahyar}}
For any $f\in\mathcal{F}_\mf^\Lf$, the $L$-Lipschitz continuity of the gradient $\nabla f$, 	\begin{subequations}
		\beq
			f(x^{t+2}) \, - \, f(y^t) 
			\, \le \,   
			( \nabla f (y^t) )^T (x^{t+2} -y^t) 
			\, + \, 
			\dfrac{L}{2} \, \norm{x^{t+2} -y^t}_2^2
			\label{ineqTemp1}
		\eeq		
and the $m$-strong convexity of $f$, 
		\beq
			f(y^{t}) \, - \, f(x^{t+1})
			\, \le \,
			( \nabla f (y^t) )^T ( y^t - x^{t+1} ) 
			\, - \, 
			\dfrac{m}{2} \, \norm{y^t - x^{t+1}}_2^2
			\label{ineqTemp2}
		\eeq
	\end{subequations}
can be used to show that~\eqref{eq.f-lemma} holds along the solution of Nesterov's accelerated algorithm~\eqref{eq.NesInSS}. In particular, for~\eqref{eq.NesInSS} we have $u^t \DefinedAs \nabla f (y^t)$ and 
	\beq
	\ba{rcl}
	x^{t+2} \, - \, y^t
	& \!\!\! = \!\!\! &
	-\alpha u^t
	\\[0.1cm]
	y^{t} \, - \, x^{t+1}
	& \!\!\! = \!\!\! &
	\beta (x^{t+1} \, - \, x^t)
	\, = \,
	\obt{-\beta I}{\beta I} \psi^t.
	\ea
	\label{eq.key-Nesterov}
	\eeq
Substituting~\eqref{eq.key-Nesterov} into~\eqref{ineqTemp1} and~\eqref{ineqTemp2} and adding the resulting inequalities completes the proof.

	\vspace*{1ex}
\subsubsection{Proof of Lemma~\ref{lem.generalNester}}
Pre- and post-multiplication of LMI~\eqref{eq.LMI2} by $(\eta^t)^T$ and $\eta^t \DefinedAs [ \, (\psi^t)^T \; (u^t)^T \, ]^T$ yields
\beq
\ba{rcl}
0
& \!\!\! \ge \!\!\! &
(\eta^t)^{T}
\tbt{A^T X\, A - X}{A^T X\, B}{B^T\, X\, A}{B^T\, X \,B}\eta^t
\,+ \, 
\theta_1 (\eta^t)^T M_1 \eta^t
\,+\,
\theta_2 (\eta^t)^T M_2 \eta^t
	\\[0.45cm]
	& \!\!\! \ge \!\!\! &
(\eta^t)^{T}
\tbt{A^T X\, A - X}{A^T X\, B}{B^T\, X\, A}{B^T\, X \,B}\eta^t
	\, + \, 
\theta_2 (\eta^t)^T M_2 \eta^t
	\ea
	\non
	\eeq
where the second inequality follows from~\eqref{eq.IQC1}. 
This yields
\begin{align}\label{ineqTemp5}
	0
	\, \le \,
	\hat{V}(\psi^t) 
	\, - \,
	\hat{V}(\psi^{t+1}) 
	\, - \,
	\theta_2 (\eta^t)^T M_2 \eta^t
\end{align}
where $ \hat{V}(\psi) \DefinedAs \psi^TX\psi $.
Also, since Lemma~\ref{lem.Mahyar} implies
\begin{equation}
\label{eq.lowerBoundOnetaTMeta}
	-(\eta^t)^T M_2  \eta^t 
	\, \le \,
	2\left(f(x^{t+1}) \,-\, f(x^{t+2})\right)
\end{equation}	
combining~\eqref{ineqTemp5} and~\eqref{eq.lowerBoundOnetaTMeta} yields
\begin{align*}
\hat{V}(\psi^{t+1}) \,+\, 2 \theta_2 f(x^{t+2}) 
\;\leq\;
\hat{V}(\psi^t) \,+\, 2 \theta_2 f(x^{t+1}).
\end{align*}
Thus, using induction, we obtain the uniform upper bound
\begin{align}\label{eq.levset}
\hat{V}(\psi^t) \,+\, 2 \theta_2 f(x^{t+1}) 
\;\leq\;
\hat{V} (\psi^0) \,+\, 2 \theta_2 f(x^{1}).
\end{align}
This allows us to bound  $\hat{V}$ by writing
\begin{subequations}\label{eq.uploLyap}
	\begin{align}
	\lambda_{\min}(X)\norm{\psi}_2^2 
	\;\leq\;
	\hat{V}(\psi)
	\;\leq\;
	\lambda_{\max}(X) \norm{\psi}_2^2.
	\end{align}
	We can also upper and lower bound $f \in \mathcal{F}_m^L$ as
	\begin{align}
	m\norm{x}_2^2
	\;\leq\;
	2 f(x) 
	\;\leq\;
	L \norm{x}_2^2.
	\end{align}
\end{subequations}
Finally, combining~\eqref{eq.levset} and~\eqref{eq.uploLyap} yields 
\beq
	\ba{l}
	\lambda_{\min}(X)\norm{\psi^{t}}_2^2 \, + \, m\, \theta_2  \norm{x^{t+1}}_2^2 
	\, \le \,
	\lambda_{\max}(X)\norm{\psi^0}_2^2 \, + \, L \,\theta_2  \norm{x^1}_2^2.
	\ea
	\non
\eeq
We complete the proof by noting that $\norm{x^{t+1}}_2\le\norm{\psi^t}_2$.

\subsubsection{Proof of Theorem~\ref{thm.main}}
To prove~\eqref{eq.hesUp}, we need to find a feasible solution for $\theta_1$, $\theta_2$ and $X$ in terms of the condition number $\kappa$.  
Let us define
\begin{equation}
\ba{rcl}
X 
&\!\!\!\DefinedAs\!\!\!&
\tbt{x_1 I }{x_0 I}{x_0 I}{x_2 I}
	\, = \;
	x_2
	\tbt{\beta^2 I }{-\beta I}{-\beta I}{I}	
\\[0.35cm]
\theta_2
&\!\!\!\DefinedAs\!\!\!&
\theta_1 (\Lf + \mf) \beta / (1 - \beta) 
\\[0.15cm]
	x_2
	& \!\!\! \DefinedAs \!\!\!&
	( (L + m) \theta_1 \,+\, \theta_2 )/\alpha
	\, = \,
	\theta_2 / (\alpha \beta).
\ea
\label{eq.feasSol}
\end{equation}
If~\eqref{eq.feasSol} holds, it is easy to verify that $X \succeq 0$ with $\lambda_{\min} (X) = 0$, $\lambda_{\max} (X) = (1 + \beta^2) x_2 = \theta_2 (1 + \beta^2) / (\alpha \beta)$, and $A^T X A - X = 0$. Moreover, the matrix $W$ on the left-hand-side of~\eqref{eq.LMI2} is block-diagonal, $W \DefinedAs \diag \, (W_{1}, W_{2})$, and negative semi-definite for all $\alpha \leq 1/L$, where
	\beq
	\ba{rcl}
	W_{1} 
	& \!\!\! = \!\!\! &
	- m (2 \theta_1 L \, C_y^T C_y + \theta_2 \, C_2^T C_2)
	\, \preceq \, 0
	\\[0.15cm]
	W_{2} 
	& \!\!\! = \!\!\! &
	- \left(
	(2 - \alpha (L + m)) \, \theta_1 
	\, + \,
	\alpha (1 - \alpha L) \, \theta_2
	\right) I
	\, \preceq \, 0.
	\ea
	\non
	\eeq

Thus, the choice of $(\theta_1,\theta_2,X)$ in~\eqref{eq.feasSol} satisfies the conditions of Lemma~\ref{lem.generalNester}. Using the expressions for the largest and smallest eigenvalues of the matrix $X$ in equation~\eqref{eq.RHSup} in Lemma~\ref{lem.generalNester}, leads to the upper bound for $\norm{x^t}_2^2$ in~\eqref{eq.hesUp}. Furthermore, from~\eqref{eq.hesUp} we have
	\beq
	\norm{x^t}_2^2
	\, \le \,
	\kappa
	\left( 1 + (1 + \beta^2)/ (\alpha \beta L) \right)
	\norm{\psi^0}_2^2 
	\non
	\eeq
and the upper bound in~\eqref{eq.upperLowerGeneralSCVX} follows from the fact that, for $\alpha$ and $\beta$ in~\eqref{eq.convParam}, 
	$
	1 + (1 + \beta^2)/ (\alpha \beta L)
	= 
	3 + 4/(\kappa - 1).
	$

To obtain the lower bound in~\eqref{eq.upperLowerGeneralSCVX}, we employ our framework for quadratic objective functions in Section~\ref{sec.trans}. In particular, for the parameters $\alpha$ and $\beta$ in~\eqref{eq.convParam}, the largest spectral radius $\rho(A_i)$ corresponds to  $A_n$, which is associated with the smallest eigenvalue $\lambda_n = m$ of $Q$. Since $A_n$ has repeated real eigenvalues $\rho = 1 - 1/\sqrt{\kappa}$, using similar arguments as in Theorem~\ref{thm.sigmax} for quadratic problems we obtain,
	\begin{align*}
		J(t_{\max})
		&\;=\; 
		\sqrt{(t_{\max}-1)^2 \rho^{2t_{\max}} \,+\, t_{\max}^2 \rho^{2(t_{\max}-1)}}
		\\[-0.cm]
		&\;\ge\;
		\sqrt{2} \left(t_{\max}\,-\,1\right) \rho^{t_{\max}}
		\, \ge \,
		{\sqrt{2}(\sqrt{\kappa}-1)^2}/({\mre\sqrt{\kappa}})
	\end{align*}
which completes the proof.

	 \newpage

\end{document}